\DeclareMathOperator{\Ex}{\mathrm{Ex}}
\def\mult{\operatorname{\rm mult}}
\def\mcF{\mathcal F}
\def\mcH{\mathcal H}
\def\M{\mathbf M}
\def\N{\mathbf N}
\def\A{\mathbf A}
\def\C{\mathbf C}
\def\H{\mathbf H}
\newtheorem{theorem}{Theorem}[section]
\newtheorem{lemma}[theorem]{Lemma}
\newtheorem{proposition}[theorem]{Proposition}
\newtheorem{corollary}[theorem]{Corollary}
\newtheorem{conjecture}[theorem]{Conjecture}
\theoremstyle{remark}
\newtheorem{remark}[theorem]{Remark}
\theoremstyle{definition}
\newtheorem{definition}[theorem]{Definition}
\theoremstyle{definition}
\numberwithin{equation}{section}
\theoremstyle{remark}
\newtheorem{claim}[theorem]{Claim}
\theoremstyle{definition}
\newcommand{\mbR}{\mathbb{R}}
\newcommand{\mbQ}{\mathbb{Q}}
\newcommand{\<}{\leq}
\def\>{\geq}
\def\vphi{\varphi}
\def\subset{\subseteq}
\newcommand{\num}{\equiv}
\def\mcF{\mathcal{F}}
\def\mcG{\mathcal{G}}
\def\mcH{\mathcal{H}}
\def\bfM{\mathbf{M}}
\def\T{\mathbf{\Theta}}
\def\H{\mathbf{H}}
\def\im{\operatorname{Im}}
\def\Supp{\operatorname{Supp}}
\def\dim{\operatorname{dim}}
\def\Ex{\operatorname{Ex}}
\def\NE{\overline{\operatorname{NE}}}
\def\hor{\operatorname{\hor}}
\def\ver{\operatorname{\ver}}
\def\mult{\operatorname{mult}}
\def\sing{\operatorname{\textsubscript{\rm sing}}}
\def\ver{\operatorname{\textsubscript{\rm ver}}}
\def\hor{\operatorname{\textsubscript{\rm hor}}}
\def\red{\operatorname{\textsubscript{\rm red}}}
\def\mbf{\mathbf}
\author{Priyankur Chaudhuri}
\address{School of Mathematics, Tata Institute of Fundamental Research, Homi Bhabha Road, Colaba,
Mumbai 400005. \newline Current address: Dipartimento di Matematica F. Enriques, Universita degli Studi di Milano, Via Cesare Saldini `
50, 20133 Milano, Italy.}
\email{pkurisibang@gmail.com}
\author{Omprokash Das}
\address{School of Mathematics, Tata Institute of Fundamental Research, Homi Bhabha Road, Colaba,
Mumbai 400005.}
\email{omprokash@gmail.com}
\title[Foliated Base-point free Theorem]{A basepoint free theorem for algebraically integrable foliations}
\begin{document}
\maketitle

\begin{abstract}
We show that if $\mathcal{F}$ is an algebraically integrable foliation on a $\mathbb{Q}$-factorial normal projective variety $X$, $ A, B \geq 0$ are $\mathbb{Q}$-divisors on $X$ with $A$ ample such that  $(\mathcal{F}, B)$ is foliated dlt, $(X,B)$ is klt and $K_{\mathcal{F}}+ A+B$ is nef, then $K_{\mathcal{F}}+A+B$ is semiample. We also provide some applications of this result such as contraction theorem for F-dlt pairs and a special case of the b-semiampleness conjecture.
\end{abstract}

\section{Introduction}
The study of birational geometry of algebraically integrable foliations was initiated by Ambro, Cascini, Shokurov and Spicer in \cite{ACSS}, where the authors used foliated MMP to study the positivity of the moduli part of lc-trivial (and more general) fibrations. They observe that the moduli divisor of a `nice' fibration can be related to a log canonical divisor of the foliation induced by it. In case the fibration is lc-trivial, the moduli b-divisor is known to be b-nef and b-good (\cite{Amb}, \cite{Hu}) and conjectured to be b-semiample \cite{PSh}. Hence abundance-type theorems for foliated log canonical pairs are expected to have applications to the b-semiampleness conjecture. \\

In \cite{CS3}, Cascini and Spicer stated a version of the following conjecture:

\begin{conjecture}\cite[Conjecture 4.1]{CS3}\label{conj}
Let $(\mcF, B)$ be an F-dlt pair (see Definition \ref{dlt}) on a $\mathbb{Q}$-factorial normal projective variety $X$, $\mcF$ is algebraically integrable and $A$ an ample $\mathbb{Q}$-divisor on $X$. Moreover, assume $(X,B)$ is klt. If $K_\mcF +A+B$ is nef, then it is semiample. 
    
\end{conjecture}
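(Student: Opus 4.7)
The plan is to reduce the conjecture to the classical Kawamata base-point free theorem via the ACSS framework for algebraically integrable foliations \cite{ACSS}. The guiding idea is that on a suitable birational model, such a foliation is induced by an equidimensional contraction $f: Y \to Z$, and the foliated log canonical divisor $K_{\mcF_Y} + B_Y$ can then be rewritten, via a canonical-bundle-formula type identity, in terms of an ordinary log canonical divisor on $Y$ together with the pullback of a divisor on $Z$. This transforms a statement about foliations into a statement about ordinary klt pairs, where the classical vanishing and base-point free machinery is available.

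First I would pass to a $\mbQ$-factorial ACSS model $\pi: Y \to X$ with $\mcF_Y := \pi^{-1}\mcF$ induced by an equidimensional contraction $f: Y \to Z$, arranged so that $K_{\mcF_Y} + B_Y = \pi^{*}(K_{\mcF} + B) + E$ with $E \geq 0$ exceptional. After a small perturbation absorbing $E$ into $\pi^{*}A$, it suffices to prove semiampleness of $K_{\mcF_Y} + A_Y + B_Y$ for a $\mbQ$-ample $A_Y$. Using algebraic integrability, $K_{\mcF_Y} = K_{Y/Z} - R_f$ where $R_f$ is the vertical multiple-fibre correction, so
\begin{equation*}
K_{\mcF_Y} + A_Y + B_Y \sim_{\mbQ} K_Y + \Gamma + A_Y - f^{*}K_Z
\end{equation*}
for a boundary $\Gamma$ combining $B_Y$ with the vertical correction. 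The F-dlt property of $(\mcF_Y, B_Y)$ together with the klt hypothesis on $(X,B)$ should propagate, after careful bookkeeping along both horizontal and vertical strata, to klt-ness of $(Y, \Gamma)$.

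Next, I would apply a canonical bundle formula to the fibration $f$, producing a decomposition $K_Z + B_Z + M_Z$ on $Z$ with $(Z, B_Z)$ klt and $M_Z$ a moduli part. The ampleness of $A_Y$ should descend, via a relative Kodaira lemma applied to $A_Y$ along fibres, to an ample contribution on $Z$. This puts us in the hypothesis of the classical Kawamata--Shokurov base-point free theorem on $Z$: a nef divisor of the form $K_Z + B_Z + (\text{ample})$ with $(Z, B_Z)$ klt, which is therefore semiample. Pulling back along $f$ and pushing forward along $\pi$ yields semiampleness of $K_\mcF + A + B$ on $X$.

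The main obstacle will be Step 3: implementing the canonical bundle formula in the F-dlt algebraically integrable setting in a way that simultaneously (i) produces a klt base pair $(Z, B_Z)$ compatible with the F-dlt data upstairs, (ii) extracts an ample summand on $Z$ from $A$, which is ample on $X$ but need not be $f$-ample, and (iii) controls the moduli divisor $M_Z$ well enough to absorb it into the ample part. The third point is closely tied to the b-semiampleness conjecture highlighted in the introduction; since $A$ is strictly ample, one should be able to absorb $M_Z$ as a b-nef/b-good perturbation without invoking the full conjecture. A secondary obstacle is keeping both the F-dlt hypothesis on $(\mcF, B)$ and the klt hypothesis on $(X, B)$ stable under the ACSS modification, which requires careful tracking of discrepancies along $\mcF$-invariant exceptional divisors.
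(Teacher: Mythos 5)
Your high-level strategy (pass to an ACSS equidimensional model, use $K_{\mcF_Y}\sim K_{Y/Z}-R_f$ to convert the foliated divisor into an ordinary one plus data on the base, and try to invoke classical base-point freeness downstairs) is the same skeleton the paper uses, but the middle of your argument has a real gap that the paper avoids precisely by inserting a relative MMP.

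The core problem is with your Step 3, the canonical bundle formula on $f\colon Y\to Z$. A canonical bundle formula produces a decomposition $K_Y+\Gamma\sim_{\mbQ}f^*(K_Z+B_Z+M_Z)$ \emph{only when} $K_Y+\Gamma\sim_{\mbQ,f}0$. Nothing in your setup gives this: $K_\mcF+A+B$ is merely nef on $X$, and restricted to a general fiber $F$ of $f$ it is $K_F+(A+B)|_F$, which contains an ample summand and is therefore emphatically not numerically trivial. So the decomposition $K_Z+B_Z+M_Z$ you want to feed into Kawamata--Shokurov on $Z$ simply does not exist. Relatedly, your ``relative Kodaira lemma'' to descend the ampleness of $A_Y$ to an ample summand on $Z$ is unfounded: there is no canonical way to turn an ample divisor on $Y$ into an ample divisor on the lower-dimensional $Z$, and indeed $A_Y$ is not $f$-trivial either, so it cannot be pulled back from $Z$. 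Finally, even granting everything, your last step (push semiampleness forward along $\pi$) fails: $\pi_*$ does not preserve semiampleness when $\pi$ contracts divisors.

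What the paper does to repair these gaps, and what you would need to add: (a) on the ACSS model $X'$ one first perturbs $\pi^*A$ by a $\pi$-anti-ample $\delta C$ and adds $\epsilon\sum E_i$, so that the resulting $\Gamma$ satisfies $K_{\mcF'}+\Gamma\sim\pi^*(K_\mcF+\Delta)+F'$ with $F'\geq 0$ supported on all $\pi$-exceptional divisors, while $(\mcF',\Gamma)$ stays F-glc and $K_{\mcF'}+\Gamma\sim_{\mbQ,Y'} K_{X'}+\Gamma+G'-\alpha f'^*B_{f'}$ is gklt; (b) one runs a $K_{\mcF'}+\Gamma$-MMP over the base with scaling of an ample divisor---which, via the relation $K_\mcF\sim_{\mbQ,Y}K_X+G$, is an honest klt MMP and so terminates by \cite{BCHM}---and the MMP contracts $F'$; (c) on the resulting model one is in the setting of Lemma \ref{ab}, whose proof contracts extremal rays one at a time: birational contractions are absorbed by re-injecting an ample divisor $H_1$ with $A-\phi_1^*H_1$ ample, and \emph{only} when a fiber-type contraction $g\colon X\to Z$ appears (where relative triviality $K_\mcF+B+\M'+\A\sim_{\mbQ,Z}0$ holds automatically) does one invoke the canonical bundle formula (Proposition \ref{cbf}) to descend and continue by induction on Picard number; (d) the final comparison uses the negativity lemma on a common resolution $X'\xleftarrow{p}W\xrightarrow{r}X''$ to force $q^*(K_\mcF+\Delta)\sim r^*(K_{\mcF''}+\Gamma'')$, rather than applying $\pi_*$. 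So the missing ingredient in your proposal is the interposed MMP that manufactures the relative triviality you need before any CBF can be applied, together with the iterative ample-divisor bookkeeping and the negativity-lemma comparison at the end.
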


This can be thought of as a basepoint free theorem for foliated dlt pairs. Indeed, the above statement can be phrased in the following equivalent form: 

Let $(\mcF, B)$ be a foliated dlt pair on $X$ where $\mcF$ is algebraically integrable and $L$ a nef divisor on $X$ such that $L-(K_\mcF+B)$ is ample. Then $L$ is semiample. 


This conjecture was shown to be true for all corank one foliations on threefolds  by Cascini and Spicer \cite[Theorem 9.4]{CS}. In this article, we prove this conjecture.

\begin{theorem}[=Theorem \ref{CSconj}]
     Let $\mcF$ be an algebraically integrable foliation on a $\mathbb{Q}$-factorial normal projective variety $X$ which is induced by a dominant rational map $f:X \dashrightarrow Y$. Suppose $(\mcF, B)$ is F-dlt, $(X,B)$ is klt and $A$ an ample $\mathbb{Q}$-divisor on $X$. If $K_{\mcF}+A+B$ is nef, then it is semiample.

\end{theorem}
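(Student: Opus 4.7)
The plan is to exploit the algebraic integrability of $\mcF$: on a general leaf, the foliated canonical divisor coincides with the usual canonical divisor, which brings the problem within reach of the classical Kawamata--Shokurov base-point free theorem fiberwise, with the remaining global issue handled by a canonical bundle formula together with a generalized base-point free theorem on the base.

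First I would pass to a good birational model. Using the Property $(*)$ construction of Ambro--Cascini--Shokurov--Spicer, I would replace $X$ by a $\mathbb{Q}$-factorial normal projective $X'$ with a birational morphism $\pi:X'\to X$ equipped with an equidimensional contraction $f':X'\to Z$ inducing $\mcF' = \pi^{-1}\mcF$, together with a boundary $B'$ such that $(\mcF',B')$ is F-dlt, $(X',B')$ is klt, and $K_{\mcF'}+B' = \pi^*(K_\mcF+B)+E$ with $E$ effective and $\pi$-exceptional. After replacing $\pi^*A$ by an ample $\mathbb{Q}$-divisor $A'$ in the same $\mathbb{Q}$-linear class (possibly absorbing a small piece of exceptional locus into $B'$), the problem reduces to showing that $L':=K_{\mcF'}+A'+B'$ is semiample on $X'$.

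Next I would restrict to a general fiber $F$ of $f'$. Adjunction for algebraically integrable foliations along leaves gives $(K_{\mcF'}+B')|_F = K_F + B'|_F$, and since $(X',B')$ is klt and $F$ is general, $(F,B'|_F)$ is klt. Then $L'|_F = K_F + A'|_F + B'|_F$ is nef and of the form ``klt $+$ ample,'' so $L'|_F$ is semiample by the classical Kawamata--Shokurov base-point free theorem. Passing to the relative Iitaka fibration of $L'$ over $Z$, I may further assume that $L'$ is $f'$-ample.

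Finally I would globalize using the descent. By the relative base-point free theorem for klt pairs combined with the foliated canonical bundle formula attached to the equidimensional contraction $f'$, there is a $\mathbb{Q}$-linear equivalence $L'\sim_{\mathbb{Q}} f'^* D$, where $D = K_Z + A_Z + B_Z + M_Z$: here $B_Z$ is the discriminant part, $M_Z$ is the moduli part, and $A_Z$ is an ample $\mathbb{Q}$-divisor on $Z$ inherited from the positivity of $A$. Since $(Z, B_Z + M_Z)$ is a generalized klt pair and $A_Z$ is ample, the generalized base-point free theorem (Birkar, Han--Liu) yields that $D$ is semiample on $Z$; hence $L'=f'^*D$ is semiample on $X'$, and pushing forward by $\pi$ gives semiampleness of $K_\mcF+A+B$ on $X$. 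The main obstacle is precisely this descent step: fiberwise semiampleness does not automatically assemble into global semiampleness, and carrying out the descent requires both a foliated canonical bundle formula for F-dlt equidimensional contractions (the core output of the ACSS machinery) and the generalized base-point free theorem on the base; a delicate point is to verify that enough of the ampleness of $A$ survives through the discriminant/moduli decomposition to produce the ample piece $A_Z$ on $Z$.
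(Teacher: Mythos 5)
Your high-level outline captures some of the spirit of the paper's argument (pass to an equidimensional model, use a canonical bundle formula, descend to the base), but there are several concrete gaps that the paper's actual proof is designed precisely to circumvent.

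First, the claim that one may ``replace $\pi^*A$ by an ample $\mathbb{Q}$-divisor $A'$ in the same $\mathbb{Q}$-linear class'' is false: $\pi^*A$ is nef and big but has degree zero on every $\pi$-exceptional curve, so nothing $\mathbb{Q}$-linearly equivalent to it is ample. The paper gets around this by subtracting a small $\pi$-ample correction $\delta C$, which destroys nefness of the resulting $K_{\mcF'}+\Gamma'$ (it now equals $\pi^*(K_\mcF+\Delta)+F'$ with $F'\geq 0$ exceptional). Restoring nefness is exactly what the foliated MMP over $Y'$ in the proof of Theorem \ref{CSconj} (and Claim \ref{clm:dlt-model}) accomplishes, and without it your divisor $L'$ is not nef on $X'$, so neither the fiber-restriction argument nor the relative base-point free theorem nor the relative Iitaka fibration is available. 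The comparison of $\pi^*(K_\mcF+\Delta)$ with the resulting $K_{\mcF''}+\Gamma''$ on a common resolution, via the double negativity lemma argument at the end of the proof, is also missing from your sketch and is what actually transfers semiampleness back to $X$.

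Second, even granting a nef $L'$ on an equidimensional model, your one-shot descent has a structural problem. Restricting to a general fiber shows $L'|_F$ is semiample but not numerically trivial, so the relative Iitaka fibration $h\colon X'\to W$ over $Z$ is in general strictly intermediate between $X'$ and $Z$. The canonical bundle formula then applies to $h$, not to $f'$, and produces a generalized \emph{foliated} pair $(\mcH, B_W, \N)$ on $W$ where $\mcH$ is the nontrivial foliation induced by $W\to Z$. You cannot invoke the ordinary generalized base-point free theorem at that point; you need the theorem you are trying to prove, applied recursively, which requires an induction that your proposal never sets up. This is exactly why the paper's Lemma \ref{ab} runs a sequence of contractions and inducts on Picard number, applying Proposition \ref{cbf} only at Mori fiber steps and then replacing $X$ by the base. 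Also note the internal inconsistency in your sketch: you ``assume $L'$ is $f'$-ample'' and then write $L'\sim_{\mathbb{Q}} f'^*D$; these are incompatible unless $f'$ has relative dimension zero. Finally, the ``delicate point'' you flag about the ample part surviving the discriminant/moduli decomposition is not a loose end but the heart of the whole argument; it is resolved in the paper by carefully splitting off $\mathbf{\Theta}$ and $H_Z$ in the proofs of Lemma \ref{ab} and Corollary \ref{gmm}, together with the b-nefness/b-abundance statements in Proposition \ref{cbf}.
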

A direct consequence of this result is the contraction theorem for F-dlt pairs (sse Corollary \ref{cont}). We now come to some extensions of this theorem. Essentially the same arguments used in proving Theorem \ref{CSconj} show that in the given setting, if $K_\mcF+A+B$ is not assumed to be nef, then $P_\sigma(\mu^*(K_\mcF+A+B))$ is semiample for some birational morphism $\mu:X' \to X$. At present, we do not know how to prove Theorem \ref{CSconj} when $(\mcF,B)$ is only lc. However, in case $\mcF$ is induced by an equidimensional morphism $f: X \to Y$ such that if $B_f$ and $G$ denote the reduced branch and ramification divisors of $f$ respectively, $(X,B+G-\epsilon f^*B_f)$ is klt for some $\epsilon >0$ (for example, if $f$ is toroidal), then $K_\mcF+A+B$ is semiample. This result (see Lemma \ref{ab}) is a key ingredient in the proof of Theorem \ref{CSconj}. Proving it requires working in the category of foliated generalized lc pairs. We briefly mention the motivation behind using this larger category. One is the failure of Bertini-type results: if $(\mcF,B)$ is lc, then $(\mcF,B+A)$ need not be lc even if $A$ is general in its linear system. But since $(\mcF,B,\A)$ is generalized lc, this provides an easy way of working with $K_\mcF+A+B$. The other reason is that the proof of Lemma \ref{ab} involves a canonical bundle formula where the output is a foliated glc pair. Therefore, along the way, we extend several results for foliated lc pairs (proved in \cite{ACSS}, \cite{CS}, \cite{Liu}) namely adjunction, cone theorem and canonical bundle formula to the setting of foliated glc pairs. \\

We close this section with an interesting application of our results to the moduli part of a nice lc-trivial fibration:

\begin{theorem}(=Corollary \ref{mod})
    Let $(X,B)$ be a $\mbQ$-factorial klt pair, $A$ an ample $\mbQ$-divisor on $X$. Suppose $f: (X, \Sigma_X) \to (Y, \Sigma_Y)$ is an equidimensional toroidal morphism to a smooth projective variety such that $K_X+A+B \sim _{\mbQ, f}0$ and components of $B$ are components of the $f$-horizontal part of the toroidal boundary $\Sigma_X$. . Then the moduli part $M_X^{A+B}$ (see Definition \ref{moddef}) is semiample.
\end{theorem}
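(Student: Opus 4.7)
The plan is to apply the main result (Theorem~\ref{CSconj}, via its toroidal incarnation Lemma~\ref{ab}) to the foliation $\mcF$ on $X$ induced by $f$, and then to transfer the resulting semiampleness of $K_\mcF+A+B$ to $M_X^{A+B}$ using the canonical bundle formula for lc-trivial fibrations together with a Kawamata-style ramified base change that isolates the moduli part from the discriminant.

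I first verify the hypotheses of Lemma~\ref{ab} for the pair $(\mcF,B)$. The foliation $\mcF$ is algebraically integrable, being induced by the dominant morphism $f$. The auxiliary klt condition $(X,B+G-\epsilon f^*B_f)$ klt required by Lemma~\ref{ab} follows from the toroidal equidimensional structure of $f$: a Hurwitz-type bound $G\le f^*B_f$ holds (so $G-\epsilon f^*B_f\le 0$ for small $\epsilon$), and since $(X,B)$ is klt and $\Supp B$ is disjoint from $\Supp G$, the pair remains klt for $\epsilon>0$ small. The F-dlt condition on $(\mcF,B)$ is satisfied because the components of $B$ are horizontal components of the toroidal snc boundary $\Sigma_X$, hence non-$\mcF$-invariant; this is the standard F-dlt setup for a foliation arising from a toroidal morphism.

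Next, combining the canonical bundle formula
\[
K_X+A+B \sim_\mbQ f^*\bigl(K_Y+B_Y^{A+B}+M_Y^{A+B}\bigr)
\]
with the identity $K_\mcF = K_X - f^*K_Y$ (which holds for an equidimensional toroidal morphism because $T_{X/Y}$ is already saturated in $T_X$ at the generic points of the vertical components of $\Sigma_X$) gives
\[
K_\mcF+A+B \sim_\mbQ f^*\bigl(B_Y^{A+B}+M_Y^{A+B}\bigr).
\]
Since $M_Y^{A+B}$ is nef by Ambro's b-nefness theorem \cite{Amb} and $B_Y^{A+B}\ge 0$, the right-hand side is nef, so Lemma~\ref{ab} applies and $K_\mcF+A+B$ is semiample.

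To extract $M_X^{A+B}=f^*M_Y^{A+B}$, the plan is to apply a Kawamata-style ramified base change $\pi\colon Y'\to Y$ along the toroidal boundary $\Sigma_Y$ chosen so as to trivialize $B_Y^{A+B}$, setting $X'=X\times_Y Y'$ and working with the induced fibration $f'\colon X'\to Y'$. After this reduction, the discriminant of $f'$ vanishes, and the canonical bundle formula on $X'$ gives $K_{\mcF'}+A'+B'\sim_\mbQ f'^*M_{Y'}^{A'+B'}=M_{X'}^{A'+B'}$ directly; Lemma~\ref{ab} applied on $X'$ then produces semiampleness of $M_{X'}^{A'+B'}$, which descends to semiampleness of $M_X^{A+B}$ along the finite surjective map $\pi$. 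The main obstacle is ensuring that the pullback pair $(\mcF',B')$ on $X'$ still satisfies the F-dlt and klt hypotheses of Lemma~\ref{ab} after the base change: this demands a toroidal-compatible choice of $\pi$ whose ramification matches the discriminant multiplicities of $f$ along $\Sigma_Y$, so that the pulled-back boundary has coefficients compatible with the F-dlt definition (in particular, the invariant vertical components introduced by $\pi$ must appear with coefficient exactly $1$).
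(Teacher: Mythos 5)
The central step of your argument rests on the identity $K_\mcF = K_X - f^*K_Y$, and this is not correct. The paper's Lemma~\ref{lem:equidimensional-adjunction} (following \cite[Example 3.2]{Dru}) gives $K_{X/Y}\sim K_\mcF + R(f)$, so in fact $K_\mcF = K_X - f^*K_Y - R(f)$, and for an equidimensional toroidal morphism the ramification divisor $R(f)$ is nonzero whenever $\Sigma_Y\neq 0$. Your parenthetical justification (saturation of $T_{X/Y}$ in $T_X$) addresses the wrong point: the kernel of $T_X\to f^*T_Y$ is indeed saturated, but the discrepancy between $\det(T_{X/Y})^*$ and $K_{X/Y}$ comes from the failure of $T_X\to f^*T_Y$ to be surjective along the ramification locus. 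Because of this error, the claimed relation $K_\mcF+A+B\sim_\mbQ f^*(B_Y^{A+B}+M_Y^{A+B})$ is false, and with it the nefness of $K_\mcF+A+B$ on $X$, which you need to invoke Lemma~\ref{ab}. The correct relation, with $\Delta:=A+B$ and $G$ the inverse image of the branch divisor, is $K_\mcF+\Delta\sim M_X^{\Delta+G}= M_X^{\Delta}+E$ for an effective $f$-very-exceptional divisor $E$ which is generally nonzero. Thus $K_\mcF+\Delta$ and $M_X^{\Delta}$ do not coincide, and $K_\mcF+\Delta$ need not be nef on $X$.

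This is precisely the gap the paper's proof is designed to close: it runs a $(K_\mcF+B+\A)$-MMP over $Y$ (simultaneously a relative MMP for an auxiliary klt pair, so it terminates), uses \cite[Theorem 3.4]{Bi} to contract the very-exceptional divisor $E$, lands on a model $X'$ where $K_{\mcF'}+\Delta'$ is nef and equals $M_{X'}^{\Delta'+G'}$, applies Corollary~\ref{gmm}/Lemma~\ref{ab} there, and transfers the semiampleness back to $M_X^{\Delta}$ by a negativity-lemma comparison on a common resolution. Your proposed Kawamata-style ramified base change does not replace this MMP: the divisor $E$ measures the mismatch between $M_X^{\Delta}$ and $M_X^{\Delta+G}$ on $X$ itself and does not disappear under a finite base change of $Y$; and, as you acknowledge, preserving the F-dlt and (g)klt hypotheses of Lemma~\ref{ab} after such a base change is itself an unresolved step, since the discriminant coefficients of $B_Y^{A+B}$ need not be of Kawamata type $1-1/m$. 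A small secondary slip: $G\le f^*B_f$ does not give $G-\epsilon f^*B_f\le 0$; rather, for small $\epsilon>0$ the coefficients of $G-\epsilon f^*B_f$ lie in $(0,1)$, and the klt conclusion follows from Lemma~\ref{lem:toroidal-singularity}, not from nonpositivity.
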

\textbf{Postscript:} This article replaces the preliminary version of our work which appeared on the arxiv in July, 2023. The main results are mostly unchanged, but due to issues with Lemma 2.4 and Theorem 3.5 in the previous version, some of the proofs have been modified. In addition, many of the proofs (particularly that of the Cone theorem) are expanded and rewritten which hopefully makes them easier to follow. 
\footnote{After the first version of this article appeared, G. Chen, J. Han, J. Liu and L. Xie have posted, a preprint \cite{CHLX} on the arxiv which generalizes many of our results to the setting of $\mbR$-divisors. This work is independent of theirs.}
 


{\bf Acknowledgments:} The first author thanks Najmuddin Fakhruddin and Roberto Svaldi for answering his questions and INdAM for financial support. We thank Jihao Liu for pointing out some errors in the first version of this article and his very detailed comments. In addition, we thank Paolo Cascini, Calum Spicer, Luca Tasin, Guodu Chen, Jingjun Han and Lingyao Xie for helpful discussions.
\section{Preliminaries}

\begin{definition} [Basics on Foliations; see \cite{Spi}, \cite{Dru}]

Let $X$ be a normal projective variety. A \emph{foliation} $\mcF$ on $X$ is a coherent subsheaf of the tangent sheaf $T_X$ which is closed under Lie brackets and such that $T_X/\mcF$ is torsion free. The \emph{singular locus} of $\mcF$ is the locus where $\mcF$ fails to be a sub-bundle of $T_X$. It is a large open subset of $X$ whose complement has codimension at least $2$. In particular, there exists a large open $U \subset X$ where $X$ and $\mcF$ are both smooth. The \emph{canonical divisor} of $\mcF$, denoted $K_\mcF$ is then the Zariski closure of det$(\mcF|_U)^*$. A subvariety $W \subset X$ is called \emph{$\mcF$-invariant} if for any local section $\partial$ of $\mcF$ over some $U \subset X$ open, $\partial(I_{W\cap U}) \subset I_{W \cap U}$. If $P \subset X $ is a prime divisor, then we define $\epsilon(P)=0$ if $P$ is $\mcF$-invariant and $\epsilon(P)=1$ otherwise.\\

Let $f:X \dashrightarrow Y$ be a dominant rational map between normal varieties and $\mcF$ a foliation on $Y$. Then as in \cite[Section 3.2]{Dru}, we can define the pullback foliation $f^{-1}\mcF$. The pullback of the zero foliation on $Y$ is known as the foliation induced by $f$. Such foliations are called \emph{algebraically integrable}. If $\mcF$ is the foliation induced by $f$, then one can check that a prime divisor $D \subset X$ is $\mcF$-invariant if and only if it is $f$-vertical, i.e. $f(D)$ does not dominate $Y$. If $f$ is a morphism, note that the foliation induced by $f$ is nothing but the relative tangent sheaf $T_{X/Y}$. If $f: X \dashrightarrow Y$ is birational and $\mcF$ a foliation on $X$, then we have an induced foliation on $Y$ defined by $g^{-1}\mcF$ where $g:= f^{-1}$.\\

If $f:X \to Y$ is an equidimensional morphism of normal varieties and $\mcF$ the foliation induced by $f$, then $K_\mcF \sim K_{X/Y}-R_f$, where $R_f := \sum_D(f^*D-f^{-1}D)$ is the ramification divisor of $f$.\\

Let $\mcF$ be the foliation on $X$ induced by a dominant rational map $f: X \dashrightarrow Y$. We say that a subvariety $V \subset X$ is \emph{tangent} to $\mcF$ if there exist 
\begin{enumerate}
    \item a birational morphism $\mu: X' \to X$ and an equidimensional contraction $X' \to Y'$ which induces $\mu^{-1}\mcF$; and
    \item a subvariety $V'$ contained in a fiber of $X' \to Y'$ such that $V= \mu(V')$.
\end{enumerate}
\end{definition}

We now explain how to extend the classical definitions of singularities of generalized pairs \cite[Section 4]{BZ} to the foliated setting.

\begin{definition}[Generalized foliated pairs and their singularities]\cite{Liu}]\label{def:g-pair} A \emph{generalized foliated pair} $(\mcF, B,\M)$ on a normal projective variety $X$ consists of a foliation $\mcF$ on $X$ a $\mathbb{Q}$-divisor $B \geq 0$ and a b-nef $\mathbb{Q}$-divisor $\M$ on X such that $K_\mcF+B+\M_X$ is $\mathbb{Q}$-Cartier. By abuse of notation we will sometimes use $\M$ to mean $\M_X$ from now on. Let $\pi:Y \to X$ be a higher model of $X$ to which $\M$ descends and $\mcF_Y$ the pulled back foliation on $Y$. Define $B_Y$ by the equation \begin{center}
    $K_{\mcF_Y}+B_Y+\M_Y = \pi^*(K_\mcF+B+\M)$,
\end{center} 
where $\mcF_Y:=\pi^{-1}\mcF$. If for any prime divisor $E$ on any such $Y$, mult$_EB_Y \leq \epsilon(E)$, then we say that $(\mcF,B,\M)$ is \emph{generalized log canonical}. We will use the abreviation \emph{glc} for generalized log canonical.\\

If $(\mcF, B,\M)$ (resp. $(X,B,\M)$) is a generalized foliated (resp. generalized) pair, we will denote by nglc$(\mcF, B,\M)$ (resp. nglc$(X,B,\M)$) the union of all non glc centers. Similarly, nlc will denote the union of all non lc centers. We will abreviate generalized log canonical center (foliated or otherwise) by \textit{glcc}.\\

\begin{remark}
It may seem more logically accurate to call $(\mcF,B,\M)$ a foliated triple than a generalized foliated pair. However, we chose the latter terminology to remain consistent with the classical literature on generalized pairs. 
\end{remark}

\end{definition}

The following is a particularly nice class of morphisms for which one can run a foliated MMP:

\begin{definition}[Property($*$) of \cite{ACSS}] \label{prop*} A log canonical pair $(X,B)/Z$ defined by a contraction $f: X \to Z$ is said to have \emph{Property ($*$)} if the following two properties are satisfied:
\begin{enumerate}
    \item there exists a reduced divisor $\Sigma_Z$ on $Z$ such that $(Z, \Sigma_Z)$ is log smooth and the $f$-vertical part of $B$ coincides with $f^{-1}\Sigma_Z$; and
    \item for any closed point $z \in Z$ and for any reduced divisor $\Sigma \geq \Sigma_Z$ such that $(Z,\Sigma)$ is log smooth around $z$, we have $(X, B+f^*(\Sigma - \Sigma_Z))$ is log canonical around $f^{-1}(z)$.
\end{enumerate}

If $(\mcF, \Delta)$ is a foliated pair, where $\mcF$ is the foliation given by an equidimensional contraction $f:X \to Z$, then $(\mcF, \Delta)$ is said to have \emph{Property ($*$)} if $(X, \Delta + G)/Z$ does, where $G:=f^{-1}B_f$ is the inverse image of the branch divisor of $f$. .
    
\end{definition}

A variety $X$ is said to be \emph{of klt type} if $(X, \Delta)$ is klt for some $\Delta \geq 0$.\\

The following lemmas will be used throughout this article.
\begin{lemma}\label{lem:equidimensional-adjunction}
    Let $f:X\to Y$ be an equidimensional proper surjective morphism from normal variety $X$ to a smooth variety $Y$ and $R(f)=\sum_P\left(f^*P-(f^*P)_{\red}\right)$ the ramification divisor, where $P$ runs through all prime divisors of $Y$. Suppose that $\mcF$ is a foliation induced by the morphism $f$. Then the following holds:
    \begin{equation}\label{eqn:ramification}
         K_{X/Y}\sim K_{\mcF}+R(f).
    \end{equation}
Moreover, if $\Delta$ is a $\mbQ$-divisor on $X$, then there is a $f$-vertical reduced divisor $\Gamma$ such that
\begin{equation}\label{eqn:relative-pair-to-foliation}
    K_X+\Delta+\Gamma\sim_{f, \mbQ} K_{\mcF}+\Delta.
\end{equation}
\end{lemma}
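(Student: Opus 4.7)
The plan is to derive both assertions by unwinding the definitions of $R(f)$ and $K_\mcF$ and using standard facts about the foliation induced by an equidimensional morphism. For the first identity (\ref{eqn:ramification}), I would verify the formula $K_\mcF\sim K_{X/Y}-R(f)$ on the large open subset $V\subset X$ where both $X$ and $f$ are smooth; there $\mcF|_V=\ker(df|_V)$, and the short exact sequence
\[
0\to \mcF|_V\to T_X|_V\to f^*T_Y|_V\to 0
\]
gives $K_\mcF|_V\sim K_{X/Y}|_V$. To extend this identity from $V$ to all of $X$, I would compare multiplicities along each codimension-one component of the ramification locus: writing $f^*P=\sum_i m_iP_i$ for a prime divisor $P\subset Y$, each $P_i$ contributes $(m_i-1)P_i$ to $K_{X/Y}$ but $0$ to $K_\mcF$, since the $P_i$ are $f$-vertical and hence $\mcF$-invariant (as noted earlier in the Preliminaries). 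This extends (\ref{eqn:ramification}) to all of $X$. Most of this is essentially a restatement of the formula $K_\mcF\sim K_{X/Y}-R_f$ already quoted in this section.

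For the second assertion, the key observation is that if $B_f$ denotes the reduced branch divisor of $f$ on $Y$, then
\[
R(f)=f^*B_f-f^{-1}B_f.
\]
Indeed, for a prime $P\subset Y$ with $f^*P=\sum_i m_iP_i$ one has $(f^*P)_{\red}=\sum_iP_i$, so $f^*P-(f^*P)_{\red}=\sum_i(m_i-1)P_i$, and only the primes $P\subset B_f$ contribute. I would then set
\[
\Gamma:=f^{-1}B_f=(f^*B_f)_{\red},
\]
which is manifestly a reduced $f$-vertical divisor. Substituting $R(f)=f^*B_f-\Gamma$ into (\ref{eqn:ramification}) gives
\[
K_X+\Delta+\Gamma\sim K_\mcF+\Delta+f^*(K_Y+B_f),
\]
and since $f^*(K_Y+B_f)$ is pulled back from $Y$, the right-hand side is $\sim_{f,\mbQ}K_\mcF+\Delta$, yielding (\ref{eqn:relative-pair-to-foliation}).

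This lemma is essentially a bookkeeping exercise, and I do not foresee any serious obstacle. The only step requiring a bit of care is the codimension-one verification of $K_\mcF\sim K_{X/Y}-R(f)$, which, as outlined above, reduces to the $\mcF$-invariance of components of $f^{-1}(B_f)$ combined with the standard computation of $K_{X/Y}$ along ramification divisors.
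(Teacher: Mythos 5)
Your proof is correct and follows essentially the same route as the paper: the paper simply cites \cite[Example 3.2]{Dru} for \eqref{eqn:ramification}, then sets $B:=(f(R(f)))_{\red}$ (your $B_f$) and $\Gamma:=(f^*B)_{\red}$, rewrites $R(f)=f^*B-\Gamma$, and substitutes as you do. The only difference is that you sketch the codimension-one verification of $K_\mcF\sim K_{X/Y}-R(f)$ rather than citing it; that sketch is fine, though the reason $K_\mcF$ contributes $0$ along a component $P_i$ is most cleanly seen from the local model where $\mcF=T_{X/Y}$ is generated by vertical vector fields unaffected by the base-change ramification, rather than from the bare fact that $P_i$ is $\mcF$-invariant.
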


\begin{proof}
    The relation \eqref{eqn:ramification} follows from \cite[Example 3.2]{Dru}. Let $B:=(f(R(f)))_{\red}$ and $\Gamma:=(f^*B)_{\red}$, then $B$ is a reduced divisor on $Y$ such that $R(f)=f^*B-(f^*B)_{\red}=f^*B-\Gamma$. Thus from \eqref{eqn:ramification} we have $ K_X+\Gamma\sim_f K_{\mcF}$. Adding $\Delta$ both sides proves our claim.   
\end{proof}

\begin{lemma}\label{lem:toroidal-singularity}
    Let $(X, \Sigma)$ be a toroidal pair (see \cite[Definition 2.1]{ACSS}) and $B$ a $\mbQ$-divisor such that $\Supp(B)\subset \Sigma$ and $K_X+B$ is $\mbQ$-Cartier. Then $(X, B)$ has log canonical (resp. klt) singularities if and only if the coefficients of $B$ are contained in the interval $[0, 1]$ (resp. $(0, 1)$).\\ 
    In parituclar, if $\mbf M$ is a $b$-nef $\mbQ$-divsor such that $\mbf M$ descends on $X$, then $(X, B, \mbf M)$ has glc (resp. gklt) singularities if and only if the coefficients of $B$ are contained in the interval $[0, 1]$ (respl. $(0, 1)$).
\end{lemma}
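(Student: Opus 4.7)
The plan is to reduce to the toric case via the local description of toroidal pairs, and then compute discrepancies combinatorially from the fan. First, by the definition of a toroidal pair (cf. \cite[Definition 2.1]{ACSS}), each closed point of $X$ admits an \'etale (equivalently, formal) neighborhood that is equivalent to an \'etale neighborhood of a point on a toric variety $X_\sigma$ with toric boundary $\partial X_\sigma$, under which $\Sigma$ corresponds to $\partial X_\sigma$. Since log canonicity, kltness, and the $\mbQ$-Cartier condition are \'etale-local, and the coefficients of $B$ along components of $\Sigma$ are preserved under this identification, it suffices to prove the statement for a toric pair $(X_\sigma, B=\sum_i a_i D_i)$ with $B$ supported on $\partial X_\sigma$.

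For the toric case, I would use support functions on the fan. The $\mbQ$-Cartier divisor $K_{X_\sigma}+B=\sum_i(a_i-1)D_i$ corresponds to a piecewise linear function $\phi$ on the fan with $\phi(v_i)=a_i-1$ and $\phi$ linear on each cone. Choosing a toric log resolution $\pi\colon Y\to X_\sigma$ (a sufficient refinement of the fan making $Y$ smooth with snc toric boundary $\sum_j D'_j$), the identity $K_Y=-\sum_j D'_j$ yields
\[
\pi^*(K_{X_\sigma}+B)=K_Y+\sum_j\bigl(\phi(v_j)+1\bigr)D'_j.
\]
Since $(Y,\sum_j D'_j)$ is snc, the pair $(X_\sigma,B)$ is lc (resp. klt) if and only if $\phi(v_j)+1\le 1$ (resp. $<1$) for every ray $v_j$ of the refined fan. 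By linearity of $\phi$ on each cone, this reduces to checking $\phi(v_i)=a_i-1\le 0$ (resp. $<0$) at the extremal rays, giving $a_i\le 1$ (resp. $a_i<1$). The remaining inequality $a_i\ge 0$ (resp. $a_i>0$) is immediate from $B\ge 0$ (or from the assumption that $D_i$ appears as a component of $B$).

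For the generalized statement, since $\mbf{M}$ descends on $X$, one has $\mbf{M}_Y=\pi^*\mbf{M}_X$ on any higher birational model $\pi\colon Y\to X$. The defining equation $K_Y+B_Y+\mbf{M}_Y=\pi^*(K_X+B+\mbf{M}_X)$ therefore simplifies to $K_Y+B_Y=\pi^*(K_X+B)$, so the generalized discrepancies of $(X,B,\mbf{M})$ coincide with the classical discrepancies of $(X,B)$; the claim follows from the first part. The main obstacle is the \'etale-local reduction to toric pairs in step~1; while standard, it requires care in tracking the coefficients of $B$ and the support $\Sigma$ through the toroidal equivalence so that the combinatorial computation on the fan legitimately controls the discrepancies on $X$.
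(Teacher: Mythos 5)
Your proposal is correct and takes essentially the same approach as the paper: reduce to the toric case by locality of the singularity conditions, then read off discrepancies from the fan (the paper cites \cite[Proposition 11.4.24]{CLS11} where you instead carry out the support-function computation explicitly), and finally observe that since $\mbf M$ descends to $X$, generalized discrepancies coincide with the usual ones. One small caveat: toroidal pairs in the sense of \cite[Definition 2.1]{ACSS} (following KKMS/Abramovich--Karu) are locally toric in the formal or analytic topology, not in general in the \'etale topology, so the reduction should be stated analytically/formally rather than \'etale-locally; this does not affect the argument since lc/klt are insensitive to this distinction.
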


\begin{proof}
Since the question is (analytic) local on $X$ and $(X, \Sigma)$ is a toroidal pair, replacing $X$ by a small analytic open subset we may assume that $X$ is a toric varity and $\Sigma$ is a torus invarient divisor on $X$. Then the singularity criteria of $(X, B)$ follows from \cite[Proposition 11.4.24]{CLS11}. In the generalized pair case, since $\mbf M$ descends to $X$, then generalized singulairties of $(X, B, \mbf M)$ are equivalent to the corresponding usual singularities of $(X, B)$ and the claim follows.  
\end{proof}

\section{Properties of generalized foliated pairs}

\subsection{Adjunction} In this subsection we will prove an adjunction theorem for generalized foliated pair. This is a generalization of \cite[Proposition 3.2]{ACSS} and our arguments are similar to theirs. 

\begin{theorem} \label{thm:adjunction}
    Let $\mathcal{F}$ be a foliation on a normal projective variety $X$ induced by a contraction $f : X \rightarrow Z$. Suppose that $(\mathcal F, \Delta , \mathbf{M})$ is a foliated generalized log canonical pair on $X$ and $X$ has a small birational model which is $\mathbb{Q}$-factorial. Let $T$ be a prime Weil divisor on $X$ with normalization $\nu: S \to T$ such that $\mult_T(\Delta)=\epsilon(T)$. Then
    \begin{enumerate}
        \item there exists a generalized foliated pair $(\mathcal{F}_S, \Delta_S,\mathbf{N})$ on $S$ such that 
        \begin{center}
            $\nu^*(K_{\mathcal{F}}+\Delta+\mathbf{M})= K_{\mathcal{F}_S}+\Delta_S+\mathbf{N}$;
        \end{center}
        \item if $(\mathcal{F}, \Delta, \mathbf{M})$ is generalized log canonical, then so is $(\mathcal{F}_S, \Delta_S,\mathbf{N})$.
    \end{enumerate}

    \begin{proof}
    First replacing $X$ by a small $\mbQ$-factorization we may assume that $X$ is $\mbQ$-factorial. We  
    define $\Delta_S$ as in \cite[Defintion 4.7]{BZ}. Now we will show that $\Delta_S \geq 0$ is effective. Since $X$ is $\mbQ$-factorial, $\mathbf{M}_X$ is $\mathbb{Q}$-Cartier. Let $\pi:X' \to X$ be a log resolution such that $(X', \Ex(\pi)+ \pi_*^{-1}\Delta)$ is log smooth, $\mathbf{M}$ descends to $X'$ and $S':=\pi_*^{-1}T$ is smooth.
       Let $\tau: S'\to S$ be the induced morphism and $\mathbf{N}_S:=\tau_*(\mathbf{M}_{X'}|_{S'})$. If $\Tilde{\Delta}_S$ is defined by usual adjunction  $\nu^*(K_\mcF+\Delta)= K_{\mcF_S}+\Tilde{\Delta}_S$ (see \cite[Proposition 3.2]{ACSS}), then we claim that $\Delta_S \geq \Tilde{\Delta}_S$. Indeed, we have
       \begin{equation}\label{eqn:different}
         \Delta_S-\Tilde{\Delta}_S+\mathbf{N}_S=\nu^*\mathbf{M}_X. 
       \end{equation}
  By the negativity lemma, $\pi^*\mathbf{M}_X=\mathbf{M}_{X'}+E$ for some effective $\pi$-exceptional $\mbQ$-divisor $E \geq 0$. Restricting to $S'$ and applying $\tau_*$ we obtain that $\nu^*\mathbf{M}_X =\mathbf{N}_S+ \tau_*(E|_{S'})$. In particular,  $\nu^*\mathbf{M}_X \geq \mathbf{N}_S$, and hence by \eqref{eqn:different}, $\Delta_S \geq \Tilde{\Delta}_S$.\\
  But $\Tilde{\Delta}_S \geq 0$ by \cite[Proposition 3.2]{ACSS}, and thus $\Delta_S \geq 0$. \\

       We are left to show that $(\mcF_S, \Delta_S,\mathbf{N})$ is glc. For this, we argue along the lines of \cite[Proposition 3.2]{ACSS}.\\

       \underline{Case 1:} $\epsilon(T)=0$, i.e. $T$ is $\mcF$-invariant. First we recall from \cite{ACSS} the definition of $\mathcal{F}_S$. Define $U:=X\setminus(X_{\sing}\cup T_{\sing}\cup \mcF_{\sing})$. Then since $T$ is $\mathcal{F}$-invariant, the inclusion $\mcF|_{T\cap U}\to T_X|_{T\cap U} $ factors through $T_{T\cap U}$. This defines a foliation on $T\cap U$ and hence a foliation $\mcF_S$ on $S$.\\
       
       Let $\pi:\overline{X} \to X$ be a log resolution of $(X, \Delta)$ on which $\mathbf{M}$ descends. Then by \cite[Theorem 2]{Ka}, there exist birational morphisms from normal projective varieties $X' \to \overline{X}$ and $Z'\to Z$ ($Z'$ smooth) with an induced equidimensional toroidal contraction $f': (X', \Sigma_{X'}) \to (Z', \Sigma_{Z'})$ such that if $\beta: X' \to X$ is the composite morphism, then we have $\mbox{Supp}(\beta_*^{-1}(B+S))+ \Ex \beta \subset \Sigma_{X'}$ and $f'$ induces the foliation $\mcF':=\beta^{-1}\mcF$. Since all toroidal pairs here are without self-intersection (see \cite[Section 1.3]{AK}),  by \cite[Proposition-Definition 2, page 57]{KKFMS73}, $S':= \beta_*^{-1}S$ is normal. Let $T'$ be the normalization of $f'(S')$. Then the restricted foliation $\mcF_{S'}$ is induced by $f'|_{S'}: S' \to T'$ (this follows from the definition of $\mcF_{S'}$). Note that $f'|_{S'}:S'\to T'$ is also toroidal between the induced toroidal pairs, see \cite[Prop. 11.4.24]{CLS11} \\
       
       We can write $\beta^*(K_\mcF +\Delta+\M) = K_{\mcF'}+\Delta'+F_0+F_1+\M$, where $\Delta':=\beta_*^{-1}\Delta$, $F_0 \leq 0$ is $\mcF'$-invariant and $F_1$ is non-$\mcF'$-invariant and the coefficients of $F_1$ are at most $1$. Then we have $K_{\mcF'}|_{S'}=K_{\mcF'_{S'}}+ \Theta$, where $\Theta \geq 0$ and $\Theta$ is scheme theoretically equal to the codimension $2$ part of Sing $\mcF'$ contained in $S'$; see the proof of \cite[Proposition 3.6]{Dru}. Now, Sing $\mcF'$, being the singular locus of the foliation induced by a toroidal morphism, is reduced and torus invariant. This implies that so is $\Theta$. Moreover, $f'|_{S'}$(Supp $\Theta) =T'$ which follows from the fact that a foliation can not be generically singular along an entire leaf. In particular, $\Theta$ is not $\mcF'_{S'}$ invariant. \\
       
       We have $
        \beta^*((K_\mcF+\Delta+\M)|_S)= 
        K_{\mcF'_{S'}}+ \Theta+(\Delta'+F_1)|_{S'}+F_0|_{S'}+\N_{S'}.
        $
       Note that $\Delta'+F_1$ contains no codimension $2$ components of Sing $\mcF'$ (Indeed, if $W$ is a codim $2$ component of Sing $\mcF'$ contained in $\Delta'+F_1$, then there exist divisors $L_1$ and $L_2$ which are leaves of $\mcF'$ and intersect along $W$. Now, $(\mcF', \Delta'+F_1)$ is sub-lc iff $(X',L_1+L_2+\Delta'+F_1)$ is sub-lc. This follows from arguments in the proof of \cite[Lemma 3.3]{CS}. But, $(X', L_1+L_2+\Delta'+F_1)$ is not sub-lc, while $(\mcF', \Delta' +F_1)$ is sub-lc by \cite[Lemma 3.1]{ACSS}.) and no component of $(\Delta'+F_1)|_{S'}$ can be $\mcF'_{S'}$-invariant. Thus we can conclude that $(\mcF', \Theta+(\Delta'+F_1)|_{S'}+F_0|_{S'}+ \M)$ is sub-glc by \cite[Lemma 3.1]{ACSS}.\\
       
       \underline{Case 2:} $\epsilon(T)=1$. Let $\pi:\overline{X} \to X$ be a higher model of $X$ on which $\mathbf{M}$ descends and such that $(\overline{X}, \pi_*^{-1}\Delta+ \Ex \pi)$ is log smooth. By \cite[Theorem 2.1]{AK}, there exist birational morphisms $X' \to \overline{X}$ and $Z' \to Z$ such that the induced morphism $(X', \Sigma_{X'}) \to (Z', \Sigma_{Z'})$ is a toroidal morphism between log smooth toroidal pairs such that letting $\beta:X' \to X$ denote the induced morphism, we have $\beta_*^{-1}\Delta+ \Ex \beta \subset \Sigma_{X'}$. Then the induced morphism $(S', \Sigma_{X'}|_{S'}) \to (T', \Sigma_{Z'}|_{T'}) $ is toroidal. Moreover, $f'$ has good horizontal divisors (see the proof of \cite[Theorem 9.5, page 61]{Kar}). More precisely, there exist local models $X_\sigma$ of $X'$ and $X_{\sigma'}$ for $Z'$ with $X_\sigma \cong X_{\sigma'}\times \mathbb{A}^m$ such that the horizontal divisors of $f'$ are pullbacks of coordinate hyperplanes of $\mathbb{A}^m$. In particular, $S'$ is transverse to the fibers of $f'$. By \cite[Corollary 3.3]{Spi}, we have $(K_{\mcF'}+S')|_{S'}= K_{\mcF'_{S'}}+\Theta_{S'}$ where $\Theta_{S'} \geq 0$ is tangent to $\mcF'$. But then $\Theta_{S'}=0$. As above, we write $\beta^*((K_\mcF+\Delta+\M)|_S)= K_{\mcF_{S'}}+ (\Delta'-S'+F_1)|_{S'}+F_0|_{S'}+\N_{S'}$. Since $f':X' \to Z'$ has good horizontal divisors, no component of $\Delta'-S'+F_1$ is tangent to $\mcF'$ anywhere. Thus $(\Delta'-S'+F_1)|_{S'}$ is $f'|_{S'}$-horizontal, which shows that $(\mcF_{S'}, (\Delta'-S'+F_1)|_{S'}+F_0|_{S'}+\N)$ is sub-glc again by \cite[Lemma 3.1]{ACSS}.

    \end{proof}
\end{theorem}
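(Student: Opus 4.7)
The plan is to imitate the ACSS adjunction for ordinary foliated lc pairs and to handle the moduli part by the Birkar--Zhang trick. First I would pass to a small $\mathbb{Q}$-factorialisation, since all the sheaves, divisors and singularities involved are preserved under small modifications; this reduces me to the case where $X$ itself is $\mathbb{Q}$-factorial and hence $\mathbf{M}_X$ is $\mathbb{Q}$-Cartier. I would then pick a log resolution $\pi\colon X'\to X$ on which $\mathbf{M}$ descends, on which $\Ex(\pi)+\pi_*^{-1}\Delta$ is log smooth, and on which $S':=\pi_*^{-1}T$ is smooth; write $\tau\colon S'\to S$ for the induced morphism and put $\mathbf{N}_S:=\tau_*(\mathbf{M}_{X'}|_{S'})$. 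Defining $\Delta_S$ by the required equality $\nu^*(K_{\mathcal{F}}+\Delta+\mathbf{M})=K_{\mathcal{F}_S}+\Delta_S+\mathbf{N}_S$ gives part (1) as soon as effectivity is verified.

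For effectivity I would compare with the non-generalised ACSS adjunction $\nu^*(K_\mathcal{F}+\Delta)=K_{\mathcal{F}_S}+\widetilde\Delta_S$ of \cite[Prop.~3.2]{ACSS}, whose output is known to be effective. Subtracting the two relations gives $\Delta_S-\widetilde\Delta_S+\mathbf{N}_S=\nu^*\mathbf{M}_X$, and by the negativity lemma applied to the $\pi$-nef $\mathbb{Q}$-divisor $\mathbf{M}_X$ one has $\pi^*\mathbf{M}_X=\mathbf{M}_{X'}+E$ with $E\geq 0$ exceptional; restricting to $S'$ and pushing down by $\tau$ yields $\nu^*\mathbf{M}_X\geq\mathbf{N}_S$, whence $\Delta_S\geq\widetilde\Delta_S\geq 0$.

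For the glc statement I would split into the two cases $\epsilon(T)=0$ and $\epsilon(T)=1$, mirroring the ACSS strategy. In both cases the key reduction is to pass, via \cite[Thm.~2.1]{AK}, to a birational equidimensional toroidal model $f'\colon(X',\Sigma_{X'})\to(Z',\Sigma_{Z'})$ inducing $\mathcal{F}':=\beta^{-1}\mathcal{F}$ with $\beta_*^{-1}(\Delta+T)+\Ex\beta\subset\Sigma_{X'}$, arranged so that $S':=\beta_*^{-1}T$ is normal and $f'|_{S'}$ is again toroidal; in the non-invariant case I would additionally use the ``good horizontal divisors'' property from \cite{Kar} to make $S'$ transverse to the fibres. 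Writing $\beta^*(K_\mathcal{F}+\Delta+\mathbf{M})=K_{\mathcal{F}'}+\Delta'+F_1+F_0+\mathbf{M}_{X'}$ with $F_0\leq 0$ $\mathcal{F}'$-invariant and $F_1$ a non-invariant boundary of coefficients $\leq 1$, I would restrict to $S'$ using Druel's formula $K_{\mathcal{F}'}|_{S'}=K_{\mathcal{F}'_{S'}}+\Theta$ in the invariant case (with $\Theta$ the codim-$2$ Sing$(\mathcal{F}')$-contribution inside $S'$), respectively $(K_{\mathcal{F}'}+S')|_{S'}=K_{\mathcal{F}'_{S'}}$ in the non-invariant case via \cite[Cor.~3.3]{Spi}. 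Sub-glc of the restriction then follows from the non-generalised ACSS criterion \cite[Lem.~3.1]{ACSS} applied to the horizontal, non-$\mathcal{F}'_{S'}$-invariant boundary, together with the fact that $\mathbf{M}_{X'}$ descends.

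The main obstacle I anticipate is the fine analysis of which components of $\beta^*(K_\mathcal{F}+\Delta+\mathbf{M})|_{S'}$ are $\mathcal{F}'_{S'}$-invariant or tangent to $\mathcal{F}'$: one needs to know that no component of $(\Delta'+F_1)|_{S'}$ lies in the codimension-two singular locus of $\mathcal{F}'$, and in the $\epsilon(T)=1$ case that transversality of $S'$ to the fibres rules out tangency. These points are precisely where the toroidal local-model structure is used, and they are the geometric content of the adjunction; once they are secured, the passage from the ordinary to the generalised setting is purely formal via the negativity lemma step above.
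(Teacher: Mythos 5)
Your proposal follows essentially the same route as the paper: reduce to the $\mathbb{Q}$-factorial case, define $\Delta_S$ and $\mathbf{N}_S$ via a log resolution on which $\mathbf{M}$ descends, prove effectivity by comparing with the ACSS adjunction and the negativity lemma, and establish sub-glc by passing to an equidimensional toroidal reduction, restricting with Druel's formula (resp.\ Spicer's \cite[Cor.~3.3]{Spi}) in the invariant (resp.\ non-invariant) case, and invoking \cite[Lemma~3.1]{ACSS}. You have also correctly identified the substantive point that must be checked — namely that no component of $(\Delta'+F_1)|_{S'}$ lands in the codimension-two singular locus of $\mathcal{F}'$ or is tangent to $\mathcal{F}'$ — which is exactly where the paper invests its effort.
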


\subsection{Cone theorem}

We need the following simple variation of the bend and break lemma of Spicer \cite[Corollary 2.28]{Spi}:

\begin{lemma}\label{bb} Let $(\mcF, \Delta , \mathbf{M})$ be a generalized foliated pair on a normal projective variety $X$ of dimension $n$. Let $N$ be any nef divisor on $X$. Suppose there exist nef $\mathbb{R}$-divisors $D_1, \cdots D_n$ such that $D_1 \cdots D_n =0$ and $-(K_\mcF + \Delta +\mathbf{M}_X) \cdot D_2 \cdots D_n >0$. Then through a general point of $X$, there passes a rational curve $\Sigma$ tangent to $\mcF$ such that $D_1 \cdot \Sigma =0$ and
$N \cdot \Sigma \leq \frac{N \cdot D_2 \cdots D_n}{-K_\mcF \cdot D_2 \cdots D_n}$.

\begin{proof}
    The arguments of \cite[Corollary 2.28]{Spi} still work if we replace $\Delta$ there with $\Delta+\mathbf{M}_X$. 
\end{proof}

\end{lemma}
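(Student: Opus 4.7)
The plan is to reduce directly to Spicer's original bend-and-break result \cite[Corollary 2.28]{Spi} by observing that the generalized moduli term $\mathbf{M}_X$ may be absorbed into the boundary without affecting any step of the argument. The crucial observation is that both $\Delta$ and $\mathbf{M}_X$ have non-negative intersection with any product of nef $\mathbb{R}$-divisors on $X$: effectivity of $\Delta$ gives this immediately, while for $\mathbf{M}_X$ one picks a higher birational model $\pi \colon Y \to X$ on which $\mathbf{M}$ descends to a nef $\mathbb{Q}$-Cartier divisor $\mathbf{M}_Y$, so $\pi_*\mathbf{M}_Y = \mathbf{M}_X$, and applies the projection formula
\[
\mathbf{M}_X \cdot D_2 \cdots D_n \;=\; \mathbf{M}_Y \cdot \pi^* D_2 \cdots \pi^* D_n \;\geq\; 0.
\]
Combining these with the hypothesis $-(K_{\mcF} + \Delta + \mathbf{M}_X) \cdot D_2 \cdots D_n > 0$ yields
\[
-K_{\mcF} \cdot D_2 \cdots D_n \;\geq\; -(K_{\mcF} + \Delta + \mathbf{M}_X) \cdot D_2 \cdots D_n \;>\; 0,
\]
so in particular the denominator appearing in the stated bound on $N \cdot \Sigma$ is strictly positive.

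With this numerical input, the remainder of the plan is simply to replay Spicer's argument with $\Delta$ replaced everywhere by $\Delta + \mathbf{M}_X$. His proof uses the boundary only through such intersection inequalities against products of nef classes, together with the Miyaoka-type bend-and-break machinery that extracts a rational curve tangent to $\mcF$ through a general point of $X$. The constraint $D_1 \cdot \Sigma = 0$ follows from $D_1 \cdots D_n = 0$ by the standard deformation argument (the family sweeping out $X$ must lie in the kernel of intersection with $D_1$), and the bound on $N \cdot \Sigma$ is then the usual Mori-style numerical estimate.

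The only substantive point requiring verification is the non-negativity of $\mathbf{M}_X$ against an $(n-1)$-fold product of nef divisors, which is the content of the first paragraph; I do not expect any real obstacle beyond this bookkeeping, because no part of Spicer's construction uses effectivity of individual components of $\Delta$—only the overall positivity inequality that we have just established.
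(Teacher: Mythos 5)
Your proposal is correct and follows essentially the same route as the paper, which simply asserts that Spicer's argument in \cite[Corollary 2.28]{Spi} goes through with $\Delta$ replaced by $\Delta+\mathbf{M}_X$. The only extra content you supply is the (correct, and worth recording) explanation of \emph{why} this replacement is harmless — namely that $\Delta\cdot D_2\cdots D_n \geq 0$ by effectivity and $\mathbf{M}_X\cdot D_2\cdots D_n \geq 0$ via descending to a model where $\mathbf{M}$ is nef and applying the projection formula, which is precisely the same computation the authors spell out later inside the proof of the cone theorem.
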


\begin{theorem}\label{cone}  Let $(\mathcal{F}, \Delta ,\mathbf{M})$ be a generalized foliated log canonical pair on a normal projective variety $X$, where $\mathcal{F}$ is induced by a dominant map $f: X \dashrightarrow Z$ of normal varieties. Then there are at most countably many rational curves $\{\xi_i\}_{i\in I}$ such that $\xi_i$ is tangent to $\mcF$, $0<-(K_{\mcF}+\Delta+\bfM_X)\cdot\xi_i\<2n$ for all $i\in I$ and 
\begin{equation*}
\overline{NE}(X)= \overline{NE}(X)_{K_{\mathcal{F}}+\Delta+ \mathbf{M}_X \geq 0}+ \sum \mathbb{R}_+[\xi _i]. 
\end{equation*}
\end{theorem}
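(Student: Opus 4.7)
The plan is to follow the strategy used by Ambro--Cascini--Shokurov--Spicer \cite{ACSS}, Cascini--Spicer \cite{CS} and Liu \cite{Liu} for the cone theorem of foliated log canonical pairs, adapting it to the generalized foliated setting with Lemma \ref{bb}---which accommodates the b-nef part $\bfM$---in place of Spicer's original bend and break. As the statement is preserved by passing to a small $\mathbb{Q}$-factorial modification, the first step is a reduction to the case where $X$ is $\mathbb{Q}$-factorial, so that supporting nef divisors for extremal rays are well behaved.

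The core technical input is a \emph{length bound}: for every $(K_\mcF + \Delta + \bfM_X)$-negative extremal ray $R$ there exists a rational curve $\xi$ tangent to $\mcF$ with $[\xi]\in R$ and $0 < -(K_\mcF + \Delta + \bfM_X)\cdot \xi \leq 2n$. Given such an $R$, I would choose a nef supporting $\mathbb{Q}$-Cartier divisor $L$ with $L^\perp \cap \NE(X) = R$; such an $L$ is necessarily not ample. Taking $D_1 := L$ and $D_2,\dots,D_n$ as nef classes---constructed by perturbing $L$ by a small ample class---so that the 1-cycle class $D_2 \cdots D_n$ lies in $R$, one obtains $D_1 D_2 \cdots D_n = L \cdot D_2 \cdots D_n = 0$ together with $-(K_\mcF + \Delta + \bfM_X)\cdot D_2 \cdots D_n > 0$. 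Lemma \ref{bb} then produces a rational curve $\xi$ tangent to $\mcF$ passing through a general point with $L \cdot \xi = 0$, hence $[\xi]\in R$. Iterated bend-and-break within $R$, in the classical Mori style, sharpens the intersection $-(K_\mcF + \Delta + \bfM_X)\cdot \xi$ to the bound $2n$.

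With the length bound in hand, \emph{countability and local finiteness} follow in the usual way: for any ample $\mathbb{Q}$-Cartier divisor $H$ and $\epsilon > 0$, applying the length bound to the perturbed pair $(\mcF, \Delta, \bfM + \epsilon H)$ shows that only finitely many extremal rays are negative against $K_\mcF + \Delta + \bfM_X + \epsilon H$, and letting $\epsilon \to 0$ exhausts the negative part of the cone by a countable collection $\{\mathbb{R}_+[\xi_i]\}_{i\in I}$. The \emph{cone equality} is then established by the classical Kleiman--Mori separating-hyperplane argument: if $V := \NE(X)_{K_\mcF + \Delta + \bfM_X \geq 0} + \overline{\sum_{i} \mathbb{R}_+[\xi_i]}$ were strictly smaller than $\NE(X)$, a separating nef $\mathbb{Q}$-Cartier divisor would, via the length-bound step, produce yet another rational curve tangent to $\mcF$ generating an extremal ray outside $V$, a contradiction.

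The principal obstacle I anticipate is the construction of the classes $D_2,\dots,D_n$ in the length-bound step: arranging $D_1 \cdots D_n = 0$ while preserving positivity of $-(K_\mcF + \Delta + \bfM_X) \cdot D_2 \cdots D_n$ requires careful use of the extremality of $R$ together with a limiting/perturbation argument on an ample class, and one has to ensure the perturbed classes remain nef on the right locus. A secondary concern is that the b-nef contribution $\bfM_X$ does not interact with foliated log discrepancies as symmetrically as the boundary $\Delta$; one must verify that each step of the argument, in particular the bend-and-break reduction to length $\leq 2n$, continues to produce rational curves tangent to $\mcF$ in the glc setting.
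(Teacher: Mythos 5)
Your sketch correctly identifies the overall architecture (reduce to $\mathbb{Q}$-factorial, establish a length bound per extremal ray via Lemma~\ref{bb}, then countability and the cone equality by standard convex-geometry arguments), and you rightly flag the construction of $D_2,\dots,D_n$ as the delicate point. But the obstacle you describe is not merely a technical annoyance that a clever perturbation will fix: it is a genuine case split that your proposal does not address, and it is where the real work in the paper's proof happens.

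The issue is this. You propose to take $D_1 = L$ (the supporting nef divisor of $R$) and build nef $D_2,\dots,D_n$ with $D_1\cdots D_n = 0$ by perturbing $L$ by a small ample class. When $L$ is \emph{not} big, this works exactly as in \cite{ACSS}: set $\nu=\nu(L)<n$, take $D_1=\cdots=D_{\nu+1}=L$ and $D_{\nu+2}=\cdots=D_n=A$ ample; then $L^{\nu+1}\cdot A^{n-\nu-1}=0$ by definition of numerical dimension, and $-(K_\mcF+\Delta+\bfM_X)\cdot L^\nu\cdot A^{n-\nu-1}>0$ because $L\equiv K_\mcF+\Delta+\bfM_X+A$. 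But when $L$ is big (so $\nu(L)=n$, $L^n>0$), there are no nef classes $D_2,\dots,D_n$ with $L\cdot D_2\cdots D_n=0$ and $-(K_\mcF+\Delta+\bfM_X)\cdot D_2\cdots D_n>0$: a complete intersection of nef divisors lies in the closure of the movable cone of curves, while a $(K_\mcF+\Delta+\bfM_X)$-negative, $L$-trivial class is forced (writing $L\equiv H'+E$ with $H'$ ample, $E\geq0$, and $E\cdot R<0$) to lie inside a proper subvariety $S\subset\mathrm{Supp}\,E$. Bend-and-break through a general point of $X$ cannot produce a curve confined to $S$. So your proposed uniform construction fails in the big case, and no amount of "perturbation on an ample class" rescues it.

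What the paper does instead in the big case is an induction on dimension: one identifies a component $S$ of $E$ with $R\subset S$, finds a minimal glc center $W_0$ of a family $(\mcF,\Delta+\lambda S,\bfM)$ containing $R$, constructs a $\mathbb{Q}$-factorial Property-$(*)$ model $\beta'':X''\to X$ (Claim~\ref{clm:dlt-model}, itself requiring a careful foliated MMP run by comparison with a klt MMP over the toroidal base $Z'$), and then applies the generalized foliated adjunction of Theorem~\ref{thm:adjunction} to a boundary divisor $E''$ on $X''$ whose image contains $R$, feeding the resulting glc pair on $E''$ to the cone theorem in dimension $n-1$. The length bound $\leq 2n$ then falls out by the projection formula from the inductive bound $\leq 2(n-1)$; there is no iterated bend-and-break sharpening step as you suggest. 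This adjunction-and-induction machinery is the essential missing ingredient in your proposal, and without it the length bound---hence the whole theorem---is not established.
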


\begin{proof}
Our proof uses some ideas of the proof of \cite[Theorem 3.9]{ACSS}.
We will proceed by induction on the dimension of $X$. Assume that the above theorem holds for varieties of dimension $\<n-1$. Let $R \subset \overline{NE}(X)$ be a $K_\mcF+\Delta+\bfM_X$-negative exposed extremal ray supported by the nef (but not ample) Cartier divisor $H_R$, i.e. $H_R^\bot\cap\NE(X)=R$. We make the following claim:

\begin{claim}\label{clm:existence-of-curve}
     $R$ is generated by a rational curve $\xi\subset X$ tangent to $\mcF$ such that $0<-(K_X+\Delta+\mbf M_X)\cdot \xi\<2\dim X$.
\end{claim}

\begin{proof}[Proof of Claim \ref{clm:existence-of-curve}]
 From a standard argument it follows that there is an ample $\mbQ$-divisor $A$ such that $H_R\num K_{\mathcal{F}}+\Delta+\mathbf{M}_X+A$. We divide our proof into two cases depending on whether $H_R$ is big or not.\\ 

\noindent
\textbf{Case I:} Suppose that $H_R$ is not big. In this case we argue as in the proof of \cite[Theorem 3.9]{ACSS}. Let $\nu:= \nu(H_R)$ be the numerical dimension of $H_R$, $D_i:=H_R$ for $1 \leq i \leq \nu+1$ and $D_i:=A$ for $\nu+1 < i \leq n$. Then $D_1 \cdots D_n = H_R^{\nu+1} \cdot A^{n- \nu-1} =0$ and thus $-(K_\mcF+\Delta+\mathbf{M}_X) \cdot D_2 \cdots D_n = (A-H_R) \cdot H_R^\nu \cdot A^{n-\nu-1} >0$. Fix $k \gg 0$ so that $N:=kH_R-(K_\mcF+\Delta+\mathbf{M}_X)$ is ample. Then by Lemma \ref{bb}, through a general point of $X$ there is a rational curve $\xi$ tangent to $\mcF$ such that $H_R\cdot \xi =0$ (thus $\xi \in R$) and 
\begin{displaymath}
\begin{split}
    -(K_\mcF+\Delta+\mathbf{M}_X)\cdot \xi = N \cdot \xi & \leq 2n \left(\frac{N \cdot H_R^\nu \cdot A^{n- \nu-1}}{-K_\mcF \cdot H_R^\nu \cdot A^{n-\nu-1}}\right)\\
    &=2n \left(\frac{-(K_\mcF+\Delta+\mathbf{M}_X)\cdot H_R^\nu \cdot A^{n-\nu-1}}{-K_\mcF \cdot H_R^\nu \cdot A^{n-\nu-1}}\right)\\
    &\leq 2n ,
    \end{split}
\end{displaymath}
where the last inequality follows from the fact that $(\Delta+ \mathbf{M}_X)\cdot H_R^\nu\cdot A^{n-\nu-1}\>0$. Indeed, to see that $\bfM_X\cdot H^\nu_R\cdot A^{n-\nu-1}\>0$, let $g:X'\to X$ be a birational morphism so that $\bfM$ descends on $X'$ and $\bfM_{X'}$ is nef. Then we have $\bfM_X\cdot H^\nu_R\cdot A^{n-\nu-1}=\bfM_{X'}\cdot (g^*H_R)^\nu\cdot (g^*A)^{n-\nu-1}\>0$.\\

\noindent
\textbf{Case II:} Suppose that $H_R$ is big. Then $H_R\equiv H'+E$ for some ample $\mbQ$-divisor $H'$ and an effective $\mbQ$-divisor $E$. Since $H_R\cdot R=0$, it follows that $E\cdot R<0$, and hence $S\cdot R<0$ for some irreducible component $S$ of $E$. For a subvariety $W\subset X$ with normalization $W^ \nu$ we will write $R \subset W$ to mean that $R \subset \im(\NE(W^\nu) \to \NE(X))$. Since $S\cdot R<0$, it's not hard to see that $R\subset S$.\\
Now let $\Lambda$ be the set of pairs $(W, \lambda)$ satisfying the following properties:
\begin{enumerate}
    \item $\lambda\>0$,
    \item $W\subset X$ is a generalized log canonical center of $(\mcF, \Delta+\lambda S, \mbf M)$, and
    \item $R\subset W$.
\end{enumerate}
Note that $\Lambda$ is not an empty collection as $(S, \lambda')\in\Lambda$ for some $\lambda'\>0$. Let $(W_0, \lambda_0)$ be a minimal element of $\Lambda$ with respect to inclusion in the first entry. \\
Let $\mu:\tilde{X} \to X$ be a resolution of $X$ such that $\mathbf{M}$ descends to $\tilde{X}$ and $\mu^{-1}W_0$ is a SNC divisor on $\tilde{X}$  containing a generalized log canonical place of $(\mcF, \Delta+\lambda_0S, \mbf M)$ over $W_0$. By \cite[Theorem 2.2]{ACSS} or \cite[Theorem 2]{Ka}, there exist birational morphisms $X' \rightarrow \Tilde{X}$, $Z' \rightarrow Z$ and reduced divisors $\Sigma_{X'}$ and $\Sigma_{Z'}$ on $X'$ and $Z'$ respectively and equidimensional toroidal morphism $f':(X', \Sigma_{X'}) \to(Z', \Sigma_{Z'})$ such that $(Z', \Sigma_{Z'})$ is log smooth, $\beta_*^{-1}(\Delta + S) + \Ex(\beta) \subset \Sigma_{X'}$ and $\mathcal{F'}:= \beta ^{-1}\mathcal{F}$ is induced by $f'$, where $\beta:X'\to X$ is the composite morphism. Let $E_i$ be the $\beta$-exceptional divisors. If $\epsilon(S)=1$, we define $\mu\>0$ as follows
\begin{equation}
    \begin{split}
        \mu &:= \sup \left\{t\>0\;:\; \left(\mathcal{F}', \beta_*^{-1}(\Delta+tS) + \sum \epsilon(E_i)E_i, \mathbf{M}\right) \mbox{ is glc}\right\}\\
            &=\sup\left\{t\>0\;:\; \left(\mathcal{F}', \beta_*^{-1}(\Delta+tS) + \sum \epsilon(E_i)E_i\right) \mbox{ is lc}\right\} \quad (\mbox{as } \mbf M \mbox{ descends to } X').
    \end{split}
\end{equation}
Then by \cite[Lemma 3.1]{ACSS}, $\mu=1-\mult_S(\Delta)$ and $(\mathcal{F}', \beta_*^{-1}(\Delta+ \mu S) + \sum \epsilon(E_i)E_i)$ is lc, in particular,  $(\mathcal{F}', \beta_*^{-1}(\Delta+ \mu S) + \sum \epsilon(E_i)E_i, \mathbf{M})$ is glc, as $\mbf M$ descends to $X'$. Now by Lemma \ref{lem:equidimensional-adjunction}, there is a $f$-vertical reduced divisor $\Gamma\subset \Sigma_{X'}$ on $X'$ such that 
\begin{equation}\label{eqn:canonical-comparison}
    K_{X'}+\Gamma \sim_{\mbQ, Z'} K_{\mathcal{F'}}.
\end{equation}
By Lemma \ref{lem:toroidal-singularity}, $(X', \beta_*^{-1}(\Delta+ \mu S)+ \sum \epsilon(E_i)E_i+ \Gamma, \mathbf{M})$ has glc singularities.
If $\epsilon(S)=0$, then clearly the same conclusion holds for $\mu=0$.\\

Next we will show the existence of a special model for the foliated pair $(\mcF, \Delta+\mu S, \mbf M)$ (called a Property ($*$) model in \cite{ACSS}) which is somewhat similar to the dlt model of usual pairs.

\begin{claim}\label{clm:dlt-model}
  There is a $\mbQ$-factorial birational model $\beta'':X''\to X$, effective $\mbQ$-divisors $\Theta''_\mu$ and $F_\mu$ such that $(\mcF'', \Theta''_\mu, \M)$ has glc singularities, $\beta''(F_\mu)\subset {\rm nglc}(\mcF, \Delta+\mu S, \M)$, and
  \begin{equation}
      K_{\mcF''}+\Theta''_{\mu}+F_{\mu}+\M_{X''}=\beta''^*(K_{\mcF}+\Delta+\mu S+\M_X),
  \end{equation}
  where $\mcF''$ is the pulled back foliation on $X''$.
\end{claim}

\begin{proof}[Proof of Claim \ref{clm:dlt-model}]
Let $H_{Z'}$ be an ample Cartier divisor on $Z'$, $\Theta':=\beta_*^{-1}(\Delta+ \mu S) + \sum \epsilon(E_i)E_i$, and $n=\dim X'=\dim X$. By Bertini's theorem, there is a semi-ample $\mbQ$-Cartier divisor $H'\sim_{\mbQ} (2n+1)f'^*H_{Z'}$ such that $(X', \Theta'+\Gamma+H',\M)$ is glc. The idea is to run a $K_{X'}+\Theta'+\Gamma+H'+\bfM_{X'}$-MMP over $X$ with scaling of an ample divisor $A$ (using \cite[Theorem 1.3]{HL21}) to obtain the required dlt model. 
First, we claim that the curves contracted at every step of this MMP are vertical over $Z'$; in particular, every step of this MMP is also a step of $K_{\mathcal{F'}}+\Theta'+H'+\mathbf{M}_{X'}$-MMP over $X$.\\

To see this, we proceed by induction on the indices $i$ and assume that it holds true for all steps $j\leq i-1$. Let $\varphi_i:X'_i\to Y'_i$ be the $i$-th contraction (divisorial or flipping type) of the $K_{X'}+\Theta'+\Gamma+H'+\bfM_{X'}$-MMP over $X$ corresponding to the extremal ray $R_i$. Let $f'_i:X'_i\to Z'$ be the induced morphism and $H'_i:={f'}_i^*H_{Z'}$. Then $(K_{X'_i}+\Theta'_i+\Gamma_i+\bfM_{X'_i})\cdot R_i<0$ (as $H'_i$ is nef) and by \cite[Theorem 1.3]{HL21} there is a rational curve $C_i\subset X'_i$ contained in the fiber of $X'_i\to X$ such that $R_i=\mbR_+[C_i]$ and $(K_{X'_i}+\Theta'_i+\Gamma_i+\bfM_{X'_i})\cdot C_i\>-2n$. Thus $f'_{i,*}(C_i)=0$, otherwise $(K_{X'_i}+\Theta'_i+\Gamma_i+H'_i+\bfM_{X'_i})\cdot C_i=(K_{X'_i}+\Theta'_i+\Gamma_i+(2n+1){f'}_i^*H_{Z'}+\bfM_{X'_i})\cdot C_i>0$, a contradiction. Then from \eqref{eqn:canonical-comparison}  we have
$
(K_{\mcF'_i}+\Theta'_i+H_i'+\bfM_{X'_i})\cdot C_i=(K_{X'_i}+\Theta'_i+\Gamma_i+H'_i+\bfM_{X'_i})\cdot C_i<0,
$
which shows that $\vphi:X'_i\to Y'_i$ is a $(K_{\mcF'_i}+\Theta'_i+H_i'+\bfM_{X'_i})$-negative contraction. This proves our claim.\\

Now we return to the proof of Claim \ref{clm:dlt-model}. We can write $K_{\mcF'}+\Theta'+\M_{X'} = \beta^*(K_\mcF+\Delta+ \mu S+\M_X)+E_1-E_2$, where $E_1$, $E_2$ are effective $\beta$-exceptional divisors without common components. Clearly, $\beta(E_2) \subset{\rm nglc}(\mcF, \Delta+\mu S, \M)$. 

Let $\lambda_1:=$ inf $\{t|K_{\mcF'}+\Theta'+H'+\M_{X'}+tA$ is nef over $X\}$. We first show that there exists an extremal ray $R_1 \subset \overline{NE}(X'/X)$ such that $(K_{\mcF'}+\Theta'+H'+\M_{X'})\cdot R_1<0$ and $(K_{\mcF'}+\Theta'+H'+ \M_{X'}+\lambda_1 A)\cdot R_1 =0$. For this, we may assume $\lambda_1 >0$. Choose $\mu_1 < \lambda_1$. Then $K_{\mcF'}+\Theta'+H'+\mu_1 A+\M_{X'}$ is not nef over $X$, so has a negative extremal ray, say $R_1$. Then $R_1$ is also $K_{\mcF'}+\Theta'+\M_{X'}$-negative. Let $S':= \beta_*^{-1}S$.
\begin{claim}\label{tangency}
    For any $K_{\mcF'}+\Theta'+\M_{X'}$-negative extremal ray $R_1$ over $X$, $R_1$ is spanned by a rational curve tangent to $\mcF'$.
\end{claim}
\begin{proof}
    
Indeed, we have the following two possibilities for $R_1$:

\begin{enumerate}
    \item $S' \cdot R_1 <0$. Then $R_1 \subset \overline{NE}(S')$. By adjunction, $K_{\mcF'}+\Theta'+\M_{X'}$ restricts to a glc pair on $S'$. By cone theorem in dimension $n-1$, $R_1$ is spanned by a rational curve $C$ tangent to the restricted foliation $\mcF'_{S'}$ and hence to $\mcF'$.\vspace{2mm}

    \item $S' \cdot R_1 \geq 0$. In this case, letting $\Delta' := \beta_*^{-1}\Delta$, we have $(K_{\mcF'}+\Delta' + \sum \epsilon(E_i)E_i +\M_{X'}) \cdot R_1 <0$. Since $(\mcF, \Delta,\M)$ is glc, we can write $K_{\mcF'}+ \Delta' +\sum \epsilon (E_i)E_i +\M_{X'} =  \beta^*(K_\mcF+\Delta+\M_X)+\sum a_i E_i$, with $a_i \geq 0$ for all $i$ and not all $a_i$ being $0$. Since $R_1 \subset \overline{NE}(X'/X)$, we get $E_i \cdot R_1<0$ for some $i$ and thus by  adjunction on $E_i$ and cone theorem in dimension $n-1$ as above, we get a rational curve $C$ spanning $R_1$ which is tangent to $\mcF'$.\\

This ends the proof of Claim \ref{tangency}.

\end{enumerate}
\end{proof}
In both of the above cases, we have the following length estimate for $C$ coming from cone theorem in dimension $n-1$: $(K_{\mcF'}+\Theta'+\M_{X'}) \cdot C \geq -2(n-1)$. 
\begin{claim}\label{scaling ray}
    There exists a  $K_{\mcF'}+\Theta'+H'+\M_{X'}+ \mu_1 A$-negative extremal ray over $X$ which is $K_{\mcF'}+\Theta'+H'+\M_{X'}+\lambda_1 A$-trivial. 
\end{claim}
\begin{proof}
The arguments are standard; see for example \cite[Lemma 3.10.8]{BCHM}, \cite[Lemma 9.2]{CS}. We give the details for the reader's convenience. The number of $K_{\mcF'}+\Theta'+H'+\M_{X'}+ \mu_1 A$-negative extremal rays over $X$ is finite. Indeed, they are $K_{\mcF'}+\Theta'+\M_{X'}$-negative, so by Claim \ref{tangency}, they are spanned by curves, hence discrete. Since $H'+\mu_1A$ is ample, the desired finiteness follows. \\

Let $R_1,\cdots, R_k$ be the $K_{\mcF'}+ \Theta'+ H'+ \M_{X'}+\mu_1A$-negative extremal rays over $X$, where $R_i= \mbR_{+}[C_i]$. Let 
\begin{center}
    $\lambda:= {\rm min} \frac{A \cdot C_i}{-(K_{\mcF'}+\Theta'+H'+\M_{X'}+\mu_1A) \cdot C_i}$.
\end{center}
Then $K_{\mcF'}+\Theta'+H'+\M_{X'}+\mu_1A+ \frac{1}{\lambda}A$ is nef over $X$ and for some $i$, we have 
\begin{center}
    $(K_{\mcF'}+\Theta'+H'+ \M_{X'}+\mu_1A +\frac{1}{\lambda}A) \cdot C_i=0$.
\end{center}
Thus we conclude that $\mu_1+ \frac{1}{\lambda}= \lambda_1$. This finishes the proof of Claim \ref{scaling ray}.
\end{proof}

Since $K_{\mcF'}+\Theta'+H'+\M_{X'} \sim _{\mbQ, Z'} K_{X'}+\Theta'+H'+ \Gamma+\M_{X'}$, where the latter is a glc pair, this extremal ray can be contracted. Let $X'_1:= X' \dashrightarrow X_2'$ be the corresponding step of MMP over $X$. This is the first step of the foliated MMP with ample scaling.\\

We show how to construct the $i$-th step of the MMP, given the $i-1$-th step, say $X'_{i-1} \dashrightarrow X'_i$. Let 
\begin{center}
    $ \alpha_i:=$inf $\{t|K_{X'_i}+ \Theta'_i+H'_i+ \Gamma_i +\M_{X'_i}+tA_i$ is nef over $X\}$, 

    $\lambda_i:= {\rm inf} \{t| K_{\mcF'_i} +\Theta'_i+H'_i+\M_{X'_i} +tA_i$ is nef over $X\}$,
\end{center}
 then we claim that $\lambda_i =\alpha_i$. Indeed, suppose $K_{X'_i}+ \Theta'_i+H'_i+\Gamma_i +\M_{X'_i}+t A_i$ is not nef over $X$. Let $R \subset \overline{NE}(X_i'/X)$ be a negative extremal ray with respect to it. Then $(K_{X'_i}+\Theta'_i+\Gamma_i+\M_{X'_i}+t A_i) \cdot R <0$. By the classical cone theorem, $R= \mbR_+[C]$, where $(K_{X'_i}+ \Theta'_i+ \Gamma_i + \M_{X'_i}+t A_i) \cdot C \geq -2n$. If $(H_i' \cdot C) >0$, then $(H'_i \cdot C) \geq 2n+1$, which would make $(K_{X'_i}+ \Theta'_i+\Gamma_i+H'_i+t A_i+\M_{X'_i}) \cdot C \geq 0$, a contradiction. Thus $H' \cdot C  =0$, which implies $f_i'(C)=$pt. Thus, $(K_{\mcF'_i}+\Theta'_i+ H'_i+ \M_{X'_i}+ t A_i)\cdot C = (K_{X'_i}+\Theta'_i+\Gamma_i +H'_i+\M_{X'_i} +t A_i) \cdot C$. Thus, we can conclude that $K_{\mcF_i'}+\Theta_i'+H'_i+\M_{X'_i}+tA_i$ is not nef over $X$. We conclude $\alpha_i \leq \lambda_i$.\\

If $\alpha_i < \lambda_i$, then $K_{\mcF'_i}+\Theta'_i+H'_i+ \M_{X'_i}+\alpha_i A_i$ is not nef over $X$. Then we have an extremal ray $R \subset \overline{NE}(X'_i/X)$ such that

\begin{equation} \label{blah}
    (K_{\mcF'_i}+ \Theta'_i+ H'_i+ \M_{X'_i}+ \alpha_i A_i) \cdot R <0.
\end{equation}
Since $(K_{\mcF'_i}+ \Theta'_i+H'_i+ \M_{X'_i} + \lambda_i A_i) \cdot R \geq 0$, we get $(A_i \cdot R) >0$. Thus equation \ref{blah} implies that $(K_{\mcF'_i}+\Theta'_i+H'_i +\M_{X'_i}) \cdot R<0$. In particular, $(K_{\mcF'_i}+\Theta'_i +\M_{X'_i}) \cdot R <0$. Then by the same arguments as above, $R$ is spanned by a rational curve tangent to $\mcF'_i$. This would make $(K_{\mcF'_i}+\Theta'_i+H'_i+ \M_{X'_i}+\alpha_i A_i) \cdot R = (K_{X'_i}+\Theta'_i+ H'_i+ \Gamma_i +\M_{X'_i}+ \alpha_i A_i) \cdot R \geq 0$ (since the latter divisor is nef over $X$), a contradiction. Thus $ \alpha_i = \lambda_i$.\\

We can thus run the desired foliated MMP with ample scaling, which is nothing but a $K_{X'}+ \Theta'+ H'+ \Gamma + \M_{X'}$-MMP over $X$ with scaling of $A$. Note that this MMP is $H'$-trivial as we observed in the beginning of proof of Claim \ref{clm:dlt-model}.\\

Let $ \lambda_{H'_i}:= {\rm inf}\{t|K_{\mcF_i'}+\Theta_i'+H_i'+\M_{X_i'}+tA_i $ is nef over $X\}$ and $\mu_0:={\rm lim} \lambda_{H'_i}$. Suppose $\mu_0 >0$. Then the $K_{\mcF'}+\Theta'+H'+\M_{X'}$-MMP over $X$ with scaling of $A$ is also a $K_{\mcF'}+\Theta'+H'+\M_{X'}+\frac{1}{2}\mu_0 A$-MMP over $X$. We now show that this MMP terminates. Indeed, otherwise, since this is also an MMP over $Z'$ (by Claim \ref{tangency}) and $K_{\mcF'}+\Theta'+H'+\M_{X'}+\frac{1}{2}\mu_0 A \sim _{Z'}K_{X'}+\Theta'+H'+\Gamma+\frac{1}{2}\mu_0 A+\M_{X'} \sim K_{X'}+D'$ with $(X',D')$ klt for some $D' \geq 0$ big, we obtain an infinite sequence of $K_{X'}+D'$-flips with ample scaling. This is ruled out by \cite{BCHM}. Thus, if $\mu_0 >0$, this MMP always terminates with $X' \dashrightarrow X''$ such that $K_{\mcF''}+\Theta''+ H'' +\M_{X''}$ is nef over $X$. This forces $K_{\mcF''}+\Theta''+\M_{X''}$ to be nef over $X$. Indeed, if not, there exists a negative extremal ray over $X$, say $R''$ with respect to it, which (as in the proof of Claim \ref{tangency}) is tangent to $\mcF''$. In particular, $H'' \cdot R'' =0$. Then $(K_{\mcF''}+\Theta''+ \M_{X''}) \cdot R'' = (K_{\mcF''}+\Theta''+ H''+\M_{X''}) \cdot R''$ which can't be negative since $K_{\mcF''}+\Theta''+H''+\M_{X''}$ is nef. \\

Let $\beta'': X'' \rightarrow X$ denote the induced morphism. Then $K_{\mcF''}+\Theta''+ \M_{X''} = \beta''^*(K_\mcF +\Delta +\mu S +\M_{X})+E_1''-E_2''$, where $E_1''$ and $E_2''$ are effective $\beta''$-exceptional divisors without common components. Since $K_{\mcF''}+\Theta''+\M_{X''}$ is nef over $X$, $E_1''=0$ by negativity lemma and this completes the proof of Claim \ref{clm:dlt-model} in the case $\mu_0 >0$. In any case, note that if the MMP with scaling terminates with $X' \dashrightarrow X''$, then $E_1''=0$.\\


We are left to consider the case $\mu_0 =0$. First, we observe the following: $ \lambda_{H_i'}:={\rm inf}\{t|K_{\mcF'_i}+\Theta_i'+H_i'+\M_{X_i'}+tA'_i$ is nef over $X\} = {\rm inf} \{t|K_{\mcF_i'}+\Theta_i'+\M_{X_i'}+tA'_i$ is nef over $X\}=: \lambda_i$. This  can be seen as follows: since $H_i'$ is nef, clearly $\lambda_{H'_i} \leq \lambda_i$. Suppose $\lambda_{H'_i} < \lambda_i$. Then $K_{\mcF_i'}+\Theta_i'+\M_{X_i'}+\lambda_{H_i'}A'_i$ is not nef over $X$. So there exists an extremal ray $R_i \subset \overline{NE}(X_i'/X)$ such that $(K_{\mcF_i'}+\Theta_i'+\M_{X_i'}+\lambda_{H_i'}A'_i) \cdot R_i <0$. Since $(K_{\mcF_i'}+\Theta_i'+\M_{X_i'}+\lambda_i A'_i) \cdot R_i \geq 0$ and $\lambda_i > \lambda_{H_i'}$, it follows that $A_i' \cdot R_i >0$. This implies $(K_{\mcF_i'}+\Theta_i'+\M_{X_i'}) \cdot R_i <0$. Thus $R_i$ is spanned by a rational curve tangent to $\mcF_i'$. Thus, $H_i' \cdot R_i =0$ which would force $R_i$ to be $K_{\mcF_i'}+\Theta_i'+H_i'+\M_{X_i'}+\lambda_{H_i'}A'_i$-negative, a contradiction. Thus $\lambda_{H_i'}= \lambda_i$.\\

We use  the arguments of \cite[Theorem 2.3]{SSMMP} to conclude. Let $X_1':=X' \dashrightarrow X_2' \dashrightarrow $ be the aforementioned MMP over $X$. Choose $N \gg 0$ such that $X_i' \dashrightarrow X_{i+1}'$ is a flip for all $i \geq N$. For each $i \geq N$, let $G_i$ be an ample divisor on $X_i'$ such that if $G_{iN}$ denotes its strict transform on $X_N'$, then $G_{iN} \rightarrow 0$ in $N^1(X_N'/X)$ (for instance, we may take $G_i:=\frac{1}{i}H_i$, where $H_i$ is reduced and ample). Since $K_{\mcF_i'}+\Theta_i'+\M_{X_i'}+\lambda_iA_i+G_i$ is ample over $X$, the relative stable base locus $B((K_{\mcF_N'}+\Theta_N'+\M_{X_N'}+\lambda_iA_N +G_{iN})/X)$ has codimension at least $2$. Letting $ i \rightarrow \infty $, it follows that there exists $W \subset X_N'$ of codimension at least $2$ such that if $C \subset X_N'$ is a curve with $(K_{\mcF_N'}+\Theta_N'+\M_{X_N'})  \cdot C <0$ and $C$ maps to a point on $X$, then $C \subset W$. Now, $K_{\mcF_N'}+\Theta_N'+\M_{X_N'} \equiv_X E_{1N}-E_{2N}$, where $E_{1N}$, $E_{2N}$ are effective and exceptional over $X$. Then by the negativity lemma of \cite[Lemma 3.3]{Bi}, it follows that $E_{2N} \geq E_{1N}$ (note: here we assume $X_N' \to X$ is not small; if the morphism is small then of course we are done). Since $E_1$ and $E_2$ had no components in common, this implies $E_{1N}=0$ as required. This ends the proof of Claim \ref{clm:dlt-model}. 
\end{proof}

Thus we have 
 \begin{equation} \label{pullback}
    \beta''^{*}(K_{\mcF}+\Delta+\mu S + \mathbf{M}_{X''})= K_{\mcF''}+ \Theta''+F_\mu + \mathbf{M}_{X''}    
\end{equation}

Now since $R$ is contained in $W_0$ and $\beta''^{-1}W_0$ is a divisor on $X''$, there is a $K_{\mcF''}+\Theta''+F_\mu+\bfM_{X''}$-negative extremal ray, say $R''$ contained in an irreducible component of $S''+ \sum E_i''+F_\mu$ such that $\beta''_*R''=R$. We claim that $R''$ is not contained in any component of $F_\mu$. Indeed, if $R''\subset F_1$ for some component $F_1$ of $F_\mu$, then $R\subset W_0\cap\beta''(F_1)$. Choose $0\<\lambda<\mu$ such that $\beta''(F_1)$ is a glc center of $(\mcF, \Delta+\lambda S, \bfM)$; note that such a $\lambda$ exists here since $(\mcF, \Delta, \bfM)$ is glc. Then there is a glc center $V$ of $(\mcF, \Delta+\lambda S, \mbf M)$ contained in $W_0\cap \beta''(F_1)$ such that $R\subset V$; this contradicts the minimality of $W_0$. Thus $R''$ is contained in a component of $ S''+ \sum E''_i$, say $E''$. We also have $(K_{\mcF''}+\Theta''+\bfM_{X''})\cdot R''<0$, as $R''$ is not contained in any component of $F_\mu$ and thus $F_\mu\cdot R''\>0$. Let $\nu:{ E''}^\nu\to E''$ be the normalization. Since $\mu=1-\mult_{S}(\Delta)$, by adjunction we have $\nu^*(K_{\mcF''}+\Theta''_{\mu}+\mathbf{M})= K_{\mathcal{G}}+\Theta''_{{E''}^\nu}+\mathbf{N}$
where $\mcG$ is the induced foliation on $E''^\nu$. Note that $(\mcG, \Theta''_{{E''}^\nu}, \mathbf N)$ is glc and $(K_{\mathcal{G}}+\Theta''_{{E''}^\nu}+\mathbf{N}_{E''^\nu})\cdot R''<0$. Then by cone theorem in dimension $n-1$ there is a rational curve $\xi \in R''$ tangent to $\mathcal{G}$ and satisfying
$0<-(K_{\mathcal{G}}+\Theta''_{{E''}^\nu}+\mathbf{N}_{E''^\nu})\cdot\xi''\<2(n-1)$.\\

Then $\xi:=\beta''(\xi'')\subset X$ is a rational curve generating the extremal ray $R$, i.e. $R=\mbR_+[\xi]$, and $\xi$ is tangent to $\mcF$ (see \cite[Lemma 3.3.(1)]{ACSS}). Thus from eqn. \eqref{pullback} along with the projection formula and the fact that $S\cdot R<0$,  we get that 
$0<-(K_{\mcF}+\Delta+\bfM_{X})\cdot \xi\leq 2n.$
This completes the proof of Claim \ref{clm:existence-of-curve}.
\end{proof}

Now let $\{R_i\}_{i\in I}$ be the set of all $K_{\mcF}+\Delta+\bfM_X$-negative exposed extremal rays of $\NE(X)$ generated by rational curves $\{\xi_{i}\}_{i\in I}$ tangent to the foliation $\mcF$ and satisfy $0<-(K_{\mcF}+\Delta+\bfM_X)\cdot \xi_i \<2\dim X$ for all $i\in I$. Suppose that $V=\NE(X)_{(K_{\mcF}+\Delta+\bfM_X)\>0}+\sum_{i\in I}\mbR_+[\xi_i]$. Now recall the following property of convex cones:  if $K\subset \mbR^N$ is a strongly convex closed cone, and \emph{exp(K)} is the set of all exposed extremal rays, then $\overline{{\rm cone}({\rm exp}(K))}=K$. Thus it's enough to show that $\overline V=V$, i.e. $V$ is a closed cone, however, this follows from a standard argument, for example, see the proof of \cite[Theorem 6.4, Third step, page 152]{Deb01}.

\end{proof}

\begin{corollary}\label{fin} Let $(\mcF,B,\M)$ be an F-glc pair on a normal projective variety $X$, $A$ and $H$ ample $\mbQ$-divisors on $X$ such that $K_\mcF+B+\M+A$ is not nef. Then there exist finitely many $K_\mcF+B+\M+A$-negative (exposed) extremal rays in $\overline{NE}(X)$. Let $\lambda:=$inf $\{t \geq 0|K_\mcF+B+\M+A+tH$ is nef$\}$. Then there exists a $K_\mcF +B+\M+A$-negative extremal ray $R$ such that $(K_\mcF+B+\M+A+\lambda H)\cdot R =0$.

\begin{proof}
    The finiteness claim follows from cone theorem for $(\mcF,B,\M)$. See the proof of \cite[Lemma 9.2]{CS} for the remaining arguments.
\end{proof}



\end{corollary}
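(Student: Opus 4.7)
The plan is to deduce both claims from the cone theorem for generalized foliated pairs (Theorem~\ref{cone}) applied to $(\mcF,B,\M)$, combined with a standard boundedness argument for extremal rays. Theorem~\ref{cone} produces countably many rational curves $\xi_i$ tangent to $\mcF$, each satisfying $0<-(K_\mcF+B+\M_X)\cdot \xi_i\leq 2\dim X$, and gives the decomposition
\[
\overline{NE}(X)=\overline{NE}(X)_{K_\mcF+B+\M_X \geq 0}+\sum_i\mbR_+[\xi_i].
\]

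For the finiteness, any $(K_\mcF+B+\M_X+A)$-negative extremal ray $R$ satisfies $A\cdot R>0$ (since $A$ is ample and $R\subset\overline{NE}(X)$), hence must also be $(K_\mcF+B+\M_X)$-negative, and therefore equals $\mbR_+[\xi_i]$ for some $i$ by the cone theorem. Moreover $A\cdot\xi_i < -(K_\mcF+B+\M_X)\cdot\xi_i\leq 2\dim X$, so the generators have $A$-degree bounded by $2\dim X$. Since $A$ is ample on the projective variety $X$, rational curves of bounded $A$-degree form a bounded family, so they realize only finitely many numerical classes in $N_1(X)$; therefore the collection of such rays is finite.

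For the existence of $R$ achieving $\lambda$: let $R_1,\dots,R_k$ be the finitely many $(K_\mcF+B+\M_X+A)$-negative extremal rays with generators $\xi_1,\dots,\xi_k$, and set
\[
t_j:=\frac{-(K_\mcF+B+\M_X+A)\cdot\xi_j}{H\cdot\xi_j}>0.
\]
I claim $\lambda=\max_j t_j$, attained by $R=R_{j_0}$ where $t_{j_0}=\max_j t_j$. For $t\geq \max_j t_j$, the divisor $K_\mcF+B+\M_X+A+tH$ is non-negative on each generator of $\overline{NE}(X)$ from the cone theorem: this is automatic on $\overline{NE}(X)_{K_\mcF+B+\M_X\geq 0}$ (as $A$ and $H$ are ample), automatic on rays $\mbR_+[\xi_i]\notin\{R_1,\dots,R_k\}$ since there $(K_\mcF+B+\M_X+A)\cdot \xi_i\geq 0$ and $H\cdot\xi_i\geq 0$, and holds on the finitely many bad rays $R_j$ by choice of $t$; hence $\lambda\leq \max_j t_j$. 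The reverse inequality is immediate since for $t<t_{j_0}$ one has $(K_\mcF+B+\M_X+A+tH)\cdot \xi_{j_0}<0$, witnessing non-nefness.

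The main technical point is the finiteness step, which relies on the length bound in Theorem~\ref{cone} together with projectivity of $X$ to invoke boundedness of families of curves of bounded $A$-degree; once finiteness is in hand, the existence of $R$ is a by-now-standard nef-threshold/scaling argument essentially as in \cite[Lemma 9.2]{CS}.
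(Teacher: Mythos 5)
Your proposal is correct and follows the same route as the paper, which simply invokes Theorem~\ref{cone} and defers the standard bookkeeping to \cite[Lemma 9.2]{CS}; you fill in exactly those details. The only point worth flagging is that your finiteness step appeals to boundedness of families of curves of bounded $A$-degree, whereas the more elementary textbook version avoids Chow/Hilbert boundedness by picking ample Cartier divisors $H_1,\dots,H_\rho$ forming a basis of $N^1(X)$, noting $A\geq c\sum H_j$ numerically for some $c>0$, so $A\cdot\xi_i\leq 2\dim X$ forces the integer vector $(H_1\cdot\xi_i,\dots,H_\rho\cdot\xi_i)$ into a finite set, and this vector determines the class $[\xi_i]$; both variants are fine here.
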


\subsection{Canonical bundle formula}

A canonical bundle formula for generalized foliated pairs seems to be unknown at present \footnote{although see \cite{CHLX} for a recent development}. A particular case appeared in \cite[Proposition 6.4]{Liu} (see also \cite[Lemma 9.1]{CS} for a version in dimension three). For our needs, the following makeshift version suffices.

\begin{proposition}\label{cbf} Let $Z$ be a $\mbQ$-factorial normal projective variety and $f: X \to Y $ a projective contraction over $Z$ between normal projective varieties, $\mcF$ an algebraically integrable foliation on $X$ induced by a projective morphism $X \to Z$ and $\mcH$ a foliation on $Y$ induced by $Y \to Z$ such that $\mcF= f^{-1}\mcH$. Now, let $(\mcF, \Delta, \M)$ be a F-glc pair on $X$ with $K_\mcF+\Delta+\M \sim _{\mathbb{Q},Y}0$ and $\M$ is b-abundant, $X$ is $\mathbb{Q}$-factorial and $(X, \Delta, \M)$ is gklt. Then there exists a F-glc pair $(\mcH, B_Y, \N)$ on $Y$ such that $K_\mcF+\Delta+\M \sim _{\mathbb{Q}}f^*(K_\mcH+B_Y+\N_Y)$. Assume $Y$ is of klt type. Then (possibly replacing $Y$ with a small $\mbQ$-factorialization) $(Y, B_Y, \N)$ is gklt.


    \begin{proof}

    We start with an observation showing that the discrepancy difference divisor is vertical over the base for algebraically integrable foliations. Suppose $\pi: X' \to X$ is a birational morphism. We have (see \cite[Lemma 3.1]{Spi}), $\pi^*K_\mcF-K_{\mcF'}=\pi^*K_X-K_{X'}+ \Theta$, where $\Theta:= \mathcal{N}^*_{\mcF'} -\pi^*\mathcal{N}_{\mcF}^*\geq 0$. Here $\mathcal{N}_{\mcF}:=K_\mcF-K_X$. For generalized pairs, this translates to 
    \begin{center}
        $\pi^*(K_\mcF+\Delta+\M_X)-(K_{\mcF'}+\M_{X'}) = \pi^*(K_X+\Delta+\M_X)-(K_{X'}+\M_{X'})+\Theta$.
    \end{center}
    Now if $\mcF$ is given by $g:X \to Z$, then $K_\mcF=K_{X/Z}-R_g+E$, where $R_g$ is the ramification divisor and $E$ is some $g$-exceptional divisor. Thus $\mathcal{N}_\mcF= E-R_g-f^*K_Z$ is vertical over $Z$ and so is $\Theta$. In particular, the discrepancy difference divisor is vertical over $Z$.\\

    Consider 
    \begin{center}
    
        \begin{tikzcd}
X'' \arrow[r,"\nu"] \arrow[d, "f''"] & X^{'} \arrow[r, "\mu"] \arrow[d, " f'"] & X \arrow[d,"f"]\\
Y'' \arrow[r, "\sigma"] &Y^{'} \arrow[r, "\pi"] &Y
\end{tikzcd}
\end{center}

where $f': (X', \Sigma_{X'}) \to (Y', \Sigma_{Y'})$ is an equidimensional toroidal reduction of $f$ with $\mu_*^{-1}\Delta +\Ex \mu \subset \Sigma_{X'}$ such that $\M$ descends to $X'$ as some pullback of a big and nef divisor, $\sigma : Y'' \to Y'$ is a finite morphism (see \cite[Proposition 5.1]{AK}, \cite[Theorem B.7]{Hu}) and $f''$ is a weakly semistable base change of $f'$. In particular, letting $\Sigma_{X''}$ denote the toroidal divisor on $X''$, we can arrange that the inverse image of $\Delta$, $\Ex \mu$ and $R_\nu$ are all part of $\Sigma_{X''}$. \\

Let $\Delta'$ be defined by $K_{\mcF'}+\Delta'+\M_{X'}= \mu^*(K_\mcF+\Delta+\M_X)$. Letting $R':= \Sigma_{\epsilon(P)=0}(f'^*P-f'^{-1}P)$, we have $K_{\mcF'/\mcH'}+\Delta'+\M_{X'}=K_{X'/Y'}+\Delta'-R'+\M_{X'} \sim_{\mathbb{Q}, Y'}0$ (see \cite[2.9]{Dru17}). Let $K_{X'}+\Delta'-R'+\M_{X'} \sim f'^*(K_{Y'}+B_{Y'}+\N_{Y'})$, where $B_{Y'}$ is the discriminant induced by $(X', \Delta'-R')$ .We will show that $\N$ is b-nef.\\

Let $U \subset Y$ be any large open subset such that $f|_{f^{-1}(U)}$ is equidimensional and let $X_U:=f^{-1}(U)$. Then we have $(K_{\mcF/\mcH}+\Delta+\M)|_{X_U}= (K_{X/Y}+\Delta-R+\M)|_{X_U} \sim_U 0$, where $R=\Sigma_{\epsilon(P)=0}(f^*P-f^{-1}P)$. Let $B_U$ be the discriminant on $U$ induced by $(X, \Delta-R,\M)$ and $B_Y$ its Zariski closure. Then it is easy to check that $B_Y \geq 0$. Let $\N_Y$ be defined by the requirement $K_\mcF+\Delta+\M_X \sim_{\mbQ}f^*(K_\mcH + B_Y+ \N_Y)$. Possibly, shrinking $U$ further to another large open subset, we can arrange that $\mu^*((K_{\mcF/\mcH}+\Delta+\M)|_{X_U})=(K_{\mcF'/\mcH'}+\Delta'+\M)|_{X'_U}$, where $X'_U:=\mu^{-1}X_U$. Then we also have $(K_{X'/Y'}+\Delta'-R'+\M)|_{X'_U}= \mu^*((K_{X/Y}+\Delta-R+\M)|_{X_U})$. This implies that $\pi_*B_{Y'}=B_Y \geq 0$. 
Let $B':=\Delta'-R'$. Let $\M_{X''}:=\nu^*\M_{X'}$ and $B''$ be defined  by $\nu^*(K_{X'}+B'+\M_{X'})=K_{X''}+B''+\M_{X''}$. Now, if $B_{Y''}$ is the discriminant induced by $(X'', B'')$, then $\sigma^*(K_{Y'}+B_{Y'})=K_{Y''}+B_{Y''}$ (see \cite[Theorem 3.2]{Amb99}). We conclude that $\sigma^*\N_{Y'}=\N_{Y''}$. Let $E \subset X'$ be a prime divisor such that coeff$_E\Delta'=1$. Since $(\mcF', \Delta')$ is sub-lc, this forces $\epsilon(E)=1$. Since $E$ is a component of the discrepancy difference divisor, it is forced to be vertical over the base of $\mcF'$, i.e, $\epsilon (E)=0$. Thus coeff$_E\Delta' <1$, implying that $(X',B')$ is sub-klt.\\

We have $K_{X''}+B''+\M_{X''}\sim f''^*(K_{Y''}+B_{Y''}+\N_{Y''})$. We claim that $\N$ descends to a nef divisor on $Y''$. Indeed, we can write $\M_{X''}\sim A_n+\frac{1}{n}E$ for $n \gg 0$ with $A_n$ semiample and $E \geq 0$. Since $(X'', B'')$ is sub-klt, so is $(X'', B''+\frac{1}{n}E)$ for $n \gg 0$. Consider the modified canonical bundle formula $K_{X''}+(B''+\frac{1}{n}E)+(\M_{X''}- \frac{1}{n}E) \sim f''^*(K_{Y''}+B_{Y''}^n+\N_{Y''}^n)$. By \cite[Theorem 4.13, 4.15]{Fil}, $\N_{Y''}^n$ descends to a nef divisor on $Y''$ for all $n$. We have $B_{Y''}^n \to B_{Y''}$ as $n \to \infty$, implying that $\N_{Y''}^n \to \N_{Y''}$ numerically. Thus $\N_{Y''}$ is nef. But the same argument applies to any higher model of $f'':X'' \to Y''$. In particular, $\N_{Y'''}$ is nef for all higher models $\pi: Y''' \to Y''$. By negativity lemma, $\N_{Y'''}=\pi^*\N_{Y''}-E$ for some $E \geq 0$ which is $\pi$-exceptional. Now, since $\pi^*\N_{Y''}^n=\N_{Y'''}^n \to \N_{Y'''}$ numerically, we have $\N_{Y'''} \equiv_{\pi}0$. Thus $E \equiv _\pi 0$. This can only happen if $E=0$. Thus $\N_{Y'''}=\pi^*\N_{Y''}$. In other words, $\N$ descends to a nef divisor on $Y''$. It follows that $\N_{Y'}^n$  and $\N_{Y'}$ descend to nef divisors on $Y'$ (this follows from the corresponding fact for $Y''$ and can be checked on higher models of $Y'$ by taking fiber products).\\

$(Y', B_{Y'},\N)$ is clearly sub-gklt. This shows that $(Y, B_Y,\N)$ is gklt (possibly replacing $Y$ with a small $\mbQ$-factorialization). Indeed, let $\Delta_{Y'}:= \pi^*(K_Y+B_Y+\N_Y)-(K_{Y'}+\N_{Y'})$. We have 
\begin{center}
 $\pi^*(K_\mcH+B_Y+\N_Y) -(K_{\mcH'}+\N_{Y'})= \pi^*(K_Y+B_Y+\N_Y)-(K_{Y'}+\N_{Y'})+\Xi$    
\end{center}
for some $\Xi \geq 0$. This implies that $B_{Y'}= \Delta_{Y'}+\Xi$. Since $(Y', B_{Y'},\N)$ is sub-gklt, so is $(Y', \Delta_{Y'},\N)$. This shows that $(Y, B_Y, \N)$ is gklt.
We are left to show that $(\mcH', B_{Y'}, \N)$ is sub-glc. Since we can always replace $f'$ with a higher equidimensional toroidal reduction, it suffices to just check the coefficients of $B_{Y'}$. For $P$ a prime divisor on $Y'$, let $a_P:=$sup $\{t|(X', \Delta'-R'+tf'^*P)$ is sub-lc over $\eta_P\}$. Then $B_{Y'}= \Sigma_P(1-a_P)$. We show that $1-a_P \leq \epsilon(P)$ for all $P$. Since clearly $1-a_P \leq 1$, the $\epsilon(P)=1$ case is clear. Let $\epsilon(P)=0$, $f'^{-1}P = \Sigma_i{Q_i}$ and coeff$_{Q_i}\Delta'= \lambda_i \leq 0$ for all $i$. It follows from the definition of $B_{Y'}$ that $\lambda_i-(l(Q_i)-1)+a_P \cdot l(Q_i) \leq 1$ for all $i$, where $l(Q_i)$ denotes the ramification index of $Q_i$. Moreover equality holds in the above inequality for some $i=i_0$. Indeed, let $f:(X, \Sigma_X) \to (Y, \Sigma_Y)$ be toroidal with $(Y, \Sigma_Y)$ log smooth. If $P\subset Y$ is a component of $\Sigma_Y$, then the claim is clear, so suppose not. Then $(Y, \Sigma_Y+P)$ is log smooth at $\eta_P$. Since toroidal morphisms have Property ($*$) (def \ref{prop*}, see \cite[Prop 2.16]{ACSS}), it follows that $(X, \Sigma_X^h+f'^{-1}P)$ is lc at $f'^{-1}(\eta_P)$. Thus $\lambda_{i_0}=(1-a_P)\cdot l(Q_{i_0})$, thereby showing that $1-a_P \leq 0$ if $\epsilon(P)=0$. This concludes the proof.

    \end{proof}
\end{proposition}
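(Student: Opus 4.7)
My plan is to reduce the foliated canonical bundle formula to the classical canonical bundle formula for usual generalized pairs, where results of Filipazzi apply, and then control the resulting discriminant using Property $(*)$ of toroidal morphisms. The key preliminary observation is that for an algebraically integrable foliation $\mcF$ induced by a morphism $g:X\to Z$, one has $K_\mcF \sim K_{X/Z} - R_g + E$ with $E$ a $g$-exceptional divisor, so $K_\mcF - K_X$ is $g$-vertical; consequently, for any birational modification $\pi:X'\to X$, the difference $(\pi^*K_\mcF - K_{\mcF'}) - (\pi^*K_X - K_{X'})$ is vertical over $Z$. This verticality is exactly what allows me to swap the foliated canonical sheaf for the classical one after passing to an equidimensional model, while preserving klt-type singularities.

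Next I would choose an equidimensional toroidal reduction $f':(X',\Sigma_{X'})\to (Y',\Sigma_{Y'})$ of $f$ on which $\mbf M$ descends (absorbing exceptional discrepancies into the boundary), followed by a weakly semistable base change $f'':X''\to Y''$ via \cite{AK}. The relation $K_{\mcF'/\mcH'}=K_{X'/Y'}-R'$ on the equidimensional model converts the foliated relative triviality into the classical relation $K_{X'}+B'+\mbf M_{X'}\sim_{\mbQ,Y'} 0$ with $B':=\Delta'-R'$, and I would define $B_{Y'}$ as the discriminant of this classical pair. Verticality of the discrepancy difference combined with $(X,\Delta,\mbf M)$ being gklt forces $(X',B',\mbf M)$ to be sub-klt, since any coefficient of $\Delta'$ attaining $1$ must come from a non-$\mcF'$-invariant divisor, whereas the new contribution from the discrepancy difference sits on invariant divisors. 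To establish b-nefness of the moduli part $\mbf N$---the main obstruction to a direct application of Filipazzi's theorems, which require $\mbf M$ itself to be b-nef---I would invoke b-abundance of $\mbf M$ to write $\mbf M \sim A_n + \tfrac{1}{n}E$ with $A_n$ semiample and $E\ge 0$, run the perturbed classical canonical bundle formula to which \cite[Theorem 4.13]{Fil} applies, and pass to the limit $n\to\infty$ using a negativity-lemma argument on higher models to conclude that $\mbf N$ descends to a nef divisor on $Y'$. Pushing the discriminant forward along $\pi:Y'\to Y$ then produces the candidate triple $(\mcH, B_Y, \mbf N)$ with $K_\mcF+\Delta+\mbf M_X \sim_\mbQ f^*(K_\mcH + B_Y + \mbf N_Y)$.

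The main remaining obstacle is the coefficient bound $\operatorname{mult}_P B_{Y'}\leq \epsilon(P)$ certifying that $(\mcH, B_Y, \mbf N)$ is F-glc. For $\epsilon(P)=1$ the bound $1-a_P\leq 1$ is automatic, but for $\epsilon(P)=0$ I must force $a_P\geq 1$, and here I would invoke Property $(*)$ of the toroidal morphism $f'$. Enlarging $\Sigma_{Y'}$ by $P$ preserves log-smoothness at the generic point $\eta_P$, and Property $(*)$ (see \cite[Proposition 2.16]{ACSS}) then forces $(X',\Sigma_{X'}^h + {f'}^{-1}P)$ to be lc at ${f'}^{-1}(\eta_P)$; combined with the explicit ramification identity $\lambda_{i_0}=(1-a_P)\,l(Q_{i_0})$ at a component $Q_{i_0}$ realizing the log canonical threshold, this gives $1-a_P\leq 0$ as needed. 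Finally, to upgrade sub-gklt on $Y'$---inherited from Filipazzi applied to the classical pair---to gklt on $Y$, I would pass to a small $\mbQ$-factorialization of $Y$ and compare the foliated and classical discrepancies on $Y'$: their difference is an effective divisor $\Xi \geq 0$, so sub-gklt in the classical sense transfers to the foliated setting and descends to $Y$ as required.
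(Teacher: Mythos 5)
Your proposal follows the paper's own proof essentially step for step: the verticality observation for the discrepancy difference of algebraically integrable foliations, the toroidal reduction plus weakly semistable base change, the conversion of the foliated relative triviality into a classical canonical bundle formula on the equidimensional model with boundary $B'=\Delta'-R'$, the perturbation $\mbf M\sim A_n+\tfrac{1}{n}E$ to invoke Filipazzi and a negativity-lemma limit to get b-nefness of $\mbf N$, the coefficient bound via Property $(*)$ and the ramification identity, and the comparison $B_{Y'}=\Delta_{Y'}+\Xi$ to upgrade sub-gklt to gklt on (a small $\mbQ$-factorialization of) $Y$. The approach and all key ingredients coincide with those of the paper.
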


\section{Basepoint free theorem}

\subsection{Basepoint freeness for foliations induced by nice morphisms}
We are in a position to prove \cite[Conjecture 4.1]{CS3} for foliations induced by nice morphisms. 

\begin{lemma}\label{ab}
Let $\mcF$ be an algebraically integrable foliation on a normal projective variety $X$ induced by a projective contraction $f:X \to Y$ to a $\mathbb{Q}$-factorial normal projective variety without exceptional divisors. Let $(\mcF,B,\M)$ be a F-glc pair on $X$ such that letting $G:=f^{-1}B_f$, where $B_f$ is the reduced branch divisor of $f$, there exists $\epsilon >0$ such that $(X, B+G-\epsilon f^*B_f, \M)$ is gklt. Let $A$ be an ample divisor on $X$ such that $K_\mcF+A+B+\M_X$ is nef. Then $K_\mcF+A+B+\M_X$ is semiample.

\begin{proof} We argue along the lines of \cite[Lemma 9.3]{CS}. Let $U \subset Y$ be a large open subset such that $f|_{f^{-1}(U)}$ is equidimensional. Let $X_U:=f^{-1}(U)$ and $G_U$ denote the inverse image of the branch divisor of $f|_{X_U}$. Note that $X_U \subset X$ is large open. Now $K_\mcF|_{X_U} \sim _U (K_X+G_U)|_{X_U}$. Since $X_U$ and $U$ are both large open, this linear equivalence extends to $K_\mcF \sim_{\mbQ,Y} K_X+G$, where $G$ is the Zariski closure of $G_U$ in $X$ and by definition is the inverse image of the branch divisor of $f$.\\

Let $\lambda:=$inf $\{t\;|\;K_\mcF+B+\M+\frac{1}{2}A+tA$ is nef$\}$. We may assume $\lambda=\frac{1}{2}$, since otherwise $K_\mcF+A+B+\M_X$ is ample and we are done. By cone theorem for $(\mcF, B,\M)$ (see Theorem \ref{cone} and Corollary \ref{fin}), there exists an extremal ray $R \subset \overline{NE}(X)$ such that $(K_\mcF+B+\M+\frac{1}{2}A) \cdot R<0$, $(K_\mcF+B+\M+A)\cdot R =0$ and $R$ is spanned by a rational curve contracted by $f$. Now observe that 
\begin{center}
    $K_\mcF+B+\M+\frac{1}{2}A \sim_Y K_X+B+G+\M+\frac{1}{2}A \sim_Y K_X+B+G-\epsilon f^*B_f +\M+\frac{1}{2}A$. 
\end{center}
Thus by contraction theorem for klt pairs, there exists a projective contraction $\phi_1:X \to X_1$ over $Y$ which contracts $R$. Let $f_1:X_1 \to Y$ denote the induced morphism. Assume $\phi_1$ is birational. Let $H_1$ be an ample divisor on $X_1$ such that $A-\phi_1^*H_1 \sim \Theta$ is ample. Then $(\mcF,B, \M+\T+ \phi_1^*\H_1)$ is F-glc and hence so is the transformed pair $(\mcF_1, B_1, \M+\T+ \H_1)$. Let $\lambda_1:=$inf $\{t|K_{\mcF_1}+B_1+\M_{X_1}+\T_{X_1}+\frac{1}{2}H_1+tH_1$ is nef$\}$. If $\lambda_1<\frac{1}{2}$, then $K_\mcF+B+\M+A$, being the pullback of an ample divisor, is semiample. So we may assume $\lambda_1= \frac{1}{2}$. By cone theorem for $(\mcF_1, B_1, \M+\T)$, there exists an extremal ray $R_1 \subset \NE(X_1)$ such that $(K_{\mcF_1}+B_1+\M_{X_1}+\T_{X_1}+\frac{1}{2}H_1)\cdot R_1 <0$ and $(K_{\mcF_1}+B_1+\M_{X_1}+\T_{X_1}+H_1)\cdot R_1=0$. Note that 

\begin{center}
   $K_{\mcF_1}+B_1+\M_{X_1}+\T_{X_1}+H_1 \sim_Y K_{X_1}+B_1+G_1-\epsilon f_1^*B_{f_1}+\M_{X_1}+\T_{X_1}+H_1$ 
\end{center}
where the RHS is a gklt pair (since $(X, B+G-\epsilon f^*B_f, \M+\T+\phi_1^*\H_1)$ is gklt). In particular, $(X_1, B_1, \M+\T+\H_1)$ is gklt (possibly replacing $X_1$ with a small $\mbQ$-factorialization). We can thus continue this process $X \xrightarrow{\phi_1}X_1 \xrightarrow{\phi_2}X_2 \cdots$ as long as $\phi_i$ is birational eventually ending up with two possible outcomes: \vspace{2 mm}

\begin{enumerate}
    \item An F-glc pair $(\mcF_n, B_n, \M'+A_n)$ on $X_n$ with $A_n$ ample such that if $\lambda_n:=$inf $\{t|K_{\mcF_n}+B_n+\M'_{X_n}+\frac{1}{2}A_n+tA_n\}$ is nef$\}$, $\lambda_n<\frac{1}{2}$. Then $K_\mcF+B+\M_X+A$ being the pullback of ample, is semiample.\\

    \item An F-glc pair $(\mcF_n,B_n, \M'+\A_n)$ on $X_n$ with $A_n$ ample such that $\lambda_n=\frac{1}{2}$ and the contraction over $Y$ defined by the above prescription is of Fano type. For notational simplicity, we drop all subscripts. Thus we are in the following situation: there exists a projective contraction over $Y$, say $g:X \to Z$ such that $K_\mcF+B+\M'+\A \sim_Z 0$. Replacing $X$ with a small $\mbQ$-factorialization, we may assume that $(X, B, \M'+\A)$ is gklt. Then by the Theorem \ref{cone} and Proposition \ref{cbf}, there exists a foliation $\mcH$ on $Z$ induced by $Z \to Y$ with $\mcF=g^{-1}\mcH$ such that we can write $K_\mcF+B+\M'+\A \sim g^*(K_\mcH+B_Z+\N_Z)$, where $(\mcH, B_Z,\N)$ is F-glc and $(Z, B_Z, \N)$ is gklt (possibly replacing $Z$ with a small $\mbQ$-factorialization). Let $H_Z$ be an ample divisor on $Z$ such that $A-g^*H_Z$ is ample. Then by Proposition \ref{cbf}, we get an F-glc pair $(\mcH, B_Z, \N')$ such that $K_\mcF+B+\M'+A-g^*H_Z \sim g^*(K_\mcH+B_Z+\N'_Z)$. Comparing with the above canonical bundle formula gives $\N=\N'+H_Z$. Furthermore, since $K_X+B+G -\epsilon f^*B_f+\M'+A \sim _{\mbQ,Z}0$, it follows that $K_{\mcH}+B_Z+\N_Z$ is linearly equivalent over $Y$ to a gklt pair by \cite{Fil}. We can thus continue the process by replacing $X$ with $Z$. Due to drop in Picard number, it eventually ends.
\end{enumerate}

    \end{proof}
\end{lemma}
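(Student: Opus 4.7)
The plan is to trade the foliated base-point-free problem for a relative classical MMP. Since $f$ has no exceptional divisors and $\mcF$ is induced by $f$, one has $K_\mcF \sim_{\mbQ,Y} K_X+G$, hence
\[
K_\mcF+B+\M_X+tA \;\sim_{\mbQ,Y}\; K_X+B+G-\epsilon f^*B_f+\M_X+tA,
\]
where the right-hand side defines a gklt pair by hypothesis. In particular, on any curve $C$ contracted by $f$, these two divisors have equal intersection numbers.

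Set $\lambda:=\inf\{t\geq 0 : K_\mcF+B+\M_X+\tfrac12A+tA \text{ is nef}\}$. If $\lambda<\tfrac12$, then $K_\mcF+B+\M_X+A$ is ample and we are done, so assume $\lambda=\tfrac12$. By Theorem \ref{cone} and Corollary \ref{fin} there is a $(K_\mcF+B+\M_X+\tfrac12A)$-negative extremal ray $R\subset\NE(X)$ that is $(K_\mcF+B+\M_X+A)$-trivial and is generated by a rational curve tangent to $\mcF$. Since $\mcF$ is induced by $f$, a tangent curve lies in a fibre of $f$, so $R$ is $f$-contracted, and the identification above shows $R$ is also $(K_X+B+G-\epsilon f^*B_f+\M_X+\tfrac12A)$-negative. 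By the classical contraction theorem over $Y$, $R$ is contracted by a morphism $\phi_1:X\to X_1$ over $Y$. If $\phi_1$ is birational, replace $A$ by $\phi_1^*H_1$ for an ample $H_1$ on $X_1$, absorbing $A-\phi_1^*H_1$ into the moduli part, and repeat. The resulting process is precisely a $K_X+B+G-\epsilon f^*B_f+\M_X+\tfrac12A$-MMP over $Y$ with scaling whose underlying pair is klt with big boundary over $Y$, so it terminates by \cite{BCHM} (or \cite{HL21} in the generalized-pair case). At every step the transformed underlying pair remains klt with the correct branch-divisor correction, so the foliated hypotheses carry over.

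The MMP ends in one of two ways. Either on some $X_n$ the new scaling threshold drops below $\tfrac12$, in which case the pulled-back divisor $K_\mcF+B+\M_X+A$ is the pullback of an ample divisor and hence semiample; or we reach a Mori fiber contraction $g:X_n\to Z$ over $Y$ with $K_{\mcF_n}+B_n+\M_{X_n}+A_n\sim_{\mbQ,Z}0$. In the latter case I invoke Proposition \ref{cbf} to descend the pair to $Z$: there is a foliation $\mcH$ on $Z$ induced by $Z\to Y$ with $\mcF_n=g^{-1}\mcH$ and an F-glc pair $(\mcH,B_Z,\N)$ with $(Z,B_Z,\N)$ gklt (after a small $\mbQ$-factorialization) such that $K_{\mcF_n}+B_n+\M_{X_n}+A_n\sim_{\mbQ} g^*(K_\mcH+B_Z+\N_Z)$. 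Choosing an ample $H_Z$ on $Z$ with $A_n-g^*H_Z$ ample and applying Proposition \ref{cbf} a second time with $A_n$ replaced by $A_n-g^*H_Z$ gives a splitting $\N=\N'+\H_Z$, so on $Z$ we again obtain an ample-polarized F-glc pair whose underlying pair is gklt and whose polarizing class fits the branch-divisor-correction hypothesis relative to $Z\to Y$. Replacing $X$ by $Z$ and iterating, the relative Picard number over $Y$ strictly decreases, so the outer loop terminates and delivers semiampleness of $K_\mcF+A+B+\M_X$ on the original $X$.

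The main obstacle is the propagation of the hypotheses: inside each MMP run classical theory guarantees that the transformed klt pair stays klt with the correct branch-divisor structure, but the descent step is more delicate — one must verify b-abundance of the moduli part, that $Z$ is of klt type, and that the gklt-with-branch-divisor-correction condition survives on $Z$ relative to $Z\to Y$. These are precisely the issues addressed in Proposition \ref{cbf} and its proof, which does the technical heavy lifting; once it is available, the outer induction on relative Picard number closes the argument.
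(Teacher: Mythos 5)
The proposal is correct and follows essentially the same route as the paper's proof: establish $K_\mcF \sim_{\mbQ,Y} K_X + G$, use the cone theorem to locate a $(K_\mcF+B+\M_X+\tfrac12A)$-negative, $(K_\mcF+B+\M_X+A)$-trivial ray tangent to $\mcF$ and hence $f$-contracted, contract over $Y$ via the classical klt contraction theorem applied to the linearly-equivalent-over-$Y$ gklt pair $K_X+B+G-\epsilon f^*B_f+\M_X+\tfrac12A$, absorb the new ample class into the moduli part and iterate, and when a fiber-type contraction $g:X\to Z$ occurs, invoke Proposition \ref{cbf} twice to descend an F-glc pair with an ample summand in its moduli part, then repeat on $Z$ with the relative Picard number dropping. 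The one place you wave a hand — checking that the descended pair on $Z$ still has an underlying gklt pair with the appropriate branch-divisor correction relative to $Z\to Y$ — is also where the paper is briefest; the paper handles it not purely through Proposition \ref{cbf} but by noting that $K_X+B+G-\epsilon f^*B_f+\M'+A \sim_{\mbQ,Z}0$ and then appealing to Filipazzi's generalized canonical bundle formula to produce a gklt pair on $Z$ that is $Y$-linearly equivalent to $K_\mcH+B_Z+\N_Z$; you should be explicit that the inductive hypothesis you carry forward is the $Y$-linear-equivalence to a gklt pair (which is exactly what the argument uses) rather than the literal branch-divisor-correction statement.
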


\subsection{Basepoint freeness for F-dlt pairs} First we recall the definition of F-dlt for algebraically integrable foliations:

\begin{definition}\label{dlt}(F-dlt)\cite[Section 3.2]{ACSS} Let $(\mcF, \Delta)$ be a foliated log canonical pair on a normal projective variety $X$ where $\mcF$ is defined by a dominant rational map $f: X \dashrightarrow Y$. $(\mcF, \Delta)$ is called \emph{F-dlt} if there exist birational morphisms $\pi: X' \to X$ and $Y' \to Y$, reduced divisors $\Sigma_{X'}$ on $X'$ and $\Sigma_{Y'}$ on $Y'$ with $\pi_*^{-1}\Delta + \Ex \pi \subset \Sigma_{X'}$ such that there exists an induced morphism $f':(X', \Sigma_{X'}) \to (Y', \Sigma_{Y'})$ which is equidimensional and toroidal inducing the foliation $\mcF':=\pi^{-1}\mcF$ and $(Y', \Sigma_{Y'})$ is log smooth (called an equidimensional toroidal reduction from now on) satisfying the following property:
\newline
for any $\pi$-exceptional prime divisor $E$ on $X'$, 
writing $K_{\mcF'}+ \Delta' = \pi^*(K_\mcF +\Delta)$, we have coeff$_E \Delta'<\epsilon(E)$. \footnote{At present, it is not clear to us if with this definition, any foliated lc pair will have an F-dlt modification.}
\end{definition}

\begin{theorem}\label{CSconj}\cite[Conjecture 4.1]{CS3}
    Let $\mcF$ be an algebraically integrable foliation on a $\mathbb{Q}$-factorial normal projective variety $X$ which is induced by a dominant rational map $f:X \dashrightarrow Y$. Suppose $(\mcF, B)$ is a F-dlt pair on $X$ such that $(X,B)$ is klt and $A$ an ample $\mathbb{Q}$-divisor on $X$. If $K_{\mcF}+A+B$ is nef, then it is semiample.

\begin{proof} We use some ideas from the proof of \cite[Theorem 10.3]{CS}. Let $\Delta:=A+B$. Consider an equidimensional toroidal reduction $f':(X',\Sigma_{X'}) \to (Y', \Sigma_{Y'})$ associated to $(\mcF,B)$ as in Definition \ref{dlt}. We can write $K_{\mcF'}+B_{X'}= \pi^*(K_\mcF+B)+E$, where $B_{X'} \geq 0$ and $E \geq 0$ is $\pi$-exceptional. Note that if $E_i$ is a $\pi$-exceptional divisor such that $\epsilon(E_i)=0$, then $E_i$ is a component of $E$. Replacing $A$ by a general member in its $\mathbb{Q}$-linear system, we may assume that $\pi^*A = \pi_*^{-1}A$. Let $\Delta_{X'}:=B_{X'}+\pi^*A$. Then $K_{\mcF'}+\Delta_{X'}=\pi^*(K_\mcF+\Delta)+E$. We can choose $C \geq 0$ such that $-C$ is $\pi$-ample and $\pi^*A -\delta C$ is ample for all $0< \delta \leq 1$. Let $\epsilon>0$ and $\Gamma':=\Delta_{X'}-\delta C +\epsilon \Sigma E_i$ where we sum over all $\pi$-exceptional $\mcF'$-non-invariant divisors. Then 
    \begin{center}
    $K_{\mcF'}+\Gamma'=K_{\mcF'}+\Delta_{X'}-\delta C +\epsilon \Sigma E_i$.
    \end{center}
    Now $ E+ \epsilon \Sigma E_i \geq 0$ and contains all $\pi$-exceptional divisors. Thus $F':=E-\delta C +\epsilon \Sigma E_i \geq 0$ and contains all $\pi$-exceptional divisors for suitable choices of $\epsilon$ and $\delta$. Note that $\Delta_{X'}= \pi_*^{-1}\Delta+E'$ for some $E' \geq 0$ which is $\pi$-exceptional and every component of $E'$ is $\mcF'$-non-invariant. Also, with this, we have $\Gamma'= \pi_*^{-1}\Delta +E' -\delta C + \epsilon \Sigma E_i = \pi^*A -\delta C +\pi_*^{-1}B+E'+ \epsilon \Sigma E_i$. By \cite[Lemma 3.1]{ACSS}, we have $(\mcF', \Sigma_{X'}^h)$ is lc and has property ($*$) \cite[Proposition 2.16]{ACSS}. Thus $(\mcF', \pi_*^{-1}B +E' +\epsilon \Sigma E_i)$ is lc and has property ($*$). In particular, MMP with respect to this pair preserves equidimensionality and property ($*$) \cite[Proposition 2.18]{ACSS}. Note that $(\mcF', \pi_*^{-1}B+E'+\epsilon \Sigma E_i, \H)$ is F-glc for $H \in |\pi^*A-\delta C|_\mbQ$. Let $\Gamma := H+\pi_*^{-1}B+E'+ \epsilon \Sigma E_i \sim \Gamma'$. Then $K_{\mcF'}+ \Gamma \sim \pi^*(K_\mcF+\Delta) +F'$. Furthermore, $K_{\mcF'}+\pi_*^{-1}B+E'+ \epsilon \Sigma E_i +\H \sim_{\mbQ,Y'}K_{X'}+\pi_*^{-1}B+E'+ \epsilon \Sigma E_i +G-\alpha f'^*B_{f'}+\H$
    and the latter is a gklt pair for some $\alpha >0$.
    Here $G'$ denotes the inverse image of the branch divisor of $f'$ and $B_{f'}$ is the reduced branch divisor of $f'$. In particular, we can run a $K_{\mcF'}+\Gamma$-MMP with scaling of $H$, which is a $K_{X'}+\Gamma +G'-\alpha f'^*B_{f'}$-MMP over $Y'$ with scaling of $H$, as in the proof of Claim \ref{clm:dlt-model}, say $ \phi: X' \dashrightarrow X''/Y'$ such that $K_{\mcF''}+\Gamma''$ is nef, where $ \Gamma'':=\phi_* \Gamma$. Let $f'': X'' \to Y'$ be the induced morphism and note that $f''$ is equidimensional.
    Further, by arguing as in the proof of Corollary \ref{gmm} below, we get an F-glc pair $(\mcF'', B''+E''+\epsilon \Sigma E''_i, \N+\A)$ on $X''$ with $K_{\mcF''}+B''+E''+ \epsilon \Sigma E_i''+\N_{X''}+A \sim K_{\mcF''}+\Gamma''$, where $A$ is ample and such that $(X'', B''+E''+\epsilon \Sigma E''_i + G''-\alpha f''^*B_{f''}, \N+\A)$ is gklt, where $G''$ denotes the  inverse image of the branch divisor and $B_{f''}$ the reduced branch divisor of $f''$. Let $\Gamma'':=\phi_*\Gamma$. Then $K_{\mcF''}+\Gamma''$ is semiample by Lemma \ref{ab}. \\

    Now let $X' \xleftarrow{p} W \xrightarrow{r} X''$ be a common resolution of $\phi$ and $X \dashrightarrow X''$ with $q: W \to X$ the induced morphism. Then
    \begin{center}
        $p^*(K_{\mcF'}+ \Gamma) \sim q^*(K_{\mcF}+\Delta)+ p^*F'$, 
        
        \end{center}
        where $p^*F' \geq 0$ is $q$-exceptional and 
        \begin{center}
            $p^*(K_{\mcF'}+\Gamma) \sim r^*(K_{\mcF''}+\Gamma'')+F$
        \end{center}
        for some $F\geq 0$ which is $r$-exceptional. Putting these together, we have
        \begin{center}
            $q^*(K_\mcF+ \Delta)+p^*F' -F \sim r^*(K_{\mcF''}+\Gamma'')$.
        \end{center}
        By nefness of $K_{\mcF''}+\Gamma''$, negativity lemma over $X$ gives $F \geq p^*F'$. Similarly, nefness of $K_{\mcF}+\Delta$ and negativity over $X''$ gives $p^*F' \geq F$. We conclude $q^*(K_{\mcF}+\Delta) \sim r^*(K_{\mcF''}+ \Gamma'')$. This finishes the proof.

\end{proof}
\end{theorem}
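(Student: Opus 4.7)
The plan is to reduce the problem to Lemma \ref{ab} by passing to an equidimensional toroidal model supplied by the F-dlt hypothesis, running a foliated MMP there, and descending the resulting semiampleness back to $X$. Since $(\mcF,B)$ is F-dlt, Definition \ref{dlt} yields a birational morphism $\pi: X' \to X$ and an equidimensional toroidal contraction $f': (X',\Sigma_{X'}) \to (Y',\Sigma_{Y'})$ inducing $\mcF' := \pi^{-1}\mcF$, with $K_{\mcF'} + B_{X'} = \pi^*(K_\mcF + B) + E$ where $E \geq 0$ is $\pi$-exceptional and, by the F-dlt condition, $E$ contains every $\pi$-exceptional $\mcF'$-invariant divisor. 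Replacing $A$ by a general element of its $\mbQ$-linear system so that $\pi^*A = \pi_*^{-1}A$, and choosing a $\pi$-antiample effective $C$ together with small positive $\delta,\epsilon$, I would modify the boundary to
\begin{equation*}
\Gamma := H + \pi_*^{-1}B + E' + \epsilon\sum E_i, \qquad H \in |\pi^*A - \delta C|_\mbQ \text{ general},
\end{equation*}
where the sum is over $\pi$-exceptional $\mcF'$-non-invariant divisors and $E'$ collects the non-invariant exceptional part of $B_{X'}$. A direct check gives $K_{\mcF'}+\Gamma \sim \pi^*(K_\mcF+A+B) + F'$ with $F' \geq 0$ supporting every $\pi$-exceptional divisor; and by \cite[Lemma 3.1, Proposition 2.16]{ACSS} the pair $(\mcF', \pi_*^{-1}B + E' + \epsilon\sum E_i, \H)$ is F-glc with Property $(*)$.

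The next step is to run a $K_{\mcF'}+\Gamma$-MMP over $Y'$ with scaling of $H$. By Lemma \ref{lem:equidimensional-adjunction} this MMP coincides, over $Y'$, with a $K_{X'}+\Gamma+G'-\alpha f'^*B_{f'}$-MMP for some small $\alpha>0$, and the latter is a relative klt MMP by Lemma \ref{lem:toroidal-singularity}, hence terminates by \cite{BCHM}. Since the MMP preserves equidimensionality and Property $(*)$ (\cite[Proposition 2.18]{ACSS}), I would obtain $\phi: X' \dashrightarrow X''/Y'$ such that $f'' : X'' \to Y'$ is equidimensional and $K_{\mcF''}+\Gamma''$ is nef, where $\Gamma'' = \phi_*\Gamma$. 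Tracking the ample piece $A$ through the MMP as a generalized nef part yields an F-glc pair on $X''$ whose underlying pair remains gklt after subtracting $\alpha f''^*B_{f''}$, so Lemma \ref{ab} applies and forces $K_{\mcF''}+\Gamma''$ to be semiample.

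To descend back to $X$, I would take a common resolution $X' \xleftarrow{p} W \xrightarrow{r} X''$ with $q := \pi\circ p: W \to X$. Pulling back gives
\begin{equation*}
q^*(K_\mcF+A+B) + p^*F' \sim r^*(K_{\mcF''}+\Gamma'') + F
\end{equation*}
for some $r$-exceptional $F \geq 0$. The negativity lemma applied over $X$ (using nefness of $K_{\mcF''}+\Gamma''$) gives $F \geq p^*F'$, while the same lemma applied over $X''$ (using nefness of $K_\mcF+A+B$) gives the reverse inequality; hence $F = p^*F'$ and $q^*(K_\mcF+A+B) \sim r^*(K_{\mcF''}+\Gamma'')$, so semiampleness on $X''$ descends to semiampleness of $K_\mcF+A+B$ on $X$.

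The main obstacle I anticipate is the middle step: assembling a modified boundary $\Gamma$ and an ample perturbation $H$ so that the foliated MMP terminates \emph{and} lands at a model on which the hypotheses of Lemma \ref{ab} are verifiable (in particular, the gklt-after-branch-correction property). The decisive input is that toroidal equidimensional contractions allow one, via the branch-ramification correction $G' - \alpha f'^*B_{f'}$, to identify foliated-MMP steps with genuine klt-MMP steps over the base; this is the mechanism that brings the problem inside the reach of \cite{BCHM} and Lemma \ref{ab}.
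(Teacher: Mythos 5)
Your proposal follows the paper's own proof essentially step for step: pass to the equidimensional toroidal reduction supplied by the F-dlt definition, build the modified boundary $\Gamma = H + \pi_*^{-1}B + E' + \epsilon\sum E_i$ with $F' \geq 0$ supporting all $\pi$-exceptional divisors, identify the foliated MMP with a klt MMP over $Y'$ via the branch-ramification correction, invoke Lemma \ref{ab} (tracking the ample part as a nef b-divisor as in Corollary \ref{gmm}), and descend semiampleness through a common resolution using negativity in both directions. The key ingredients — Definition \ref{dlt}, Lemmas \ref{lem:equidimensional-adjunction}, \ref{lem:toroidal-singularity}, \ref{ab}, Claim \ref{clm:dlt-model}, and \cite{BCHM} — are deployed in the same order and for the same purposes as in the paper.
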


\section{Applications}
The arguments used to prove the above Theorem can be used to show that abundance holds in a more general situation. We record this below as a corollary of Theorem \ref{CSconj}. First we need to recall some definitions from Nakayama's book \cite{Nak04}.\\

For a pseudoeffective divisor $D$ on a smooth projective variety $X$, let $\kappa(D)$ and $\nu(D)$ denote its Iitaka and numerical dimension \cite [Definition 2.10]{GL13} respectively. Then there exists a decomposition of $D$ into $\mathbb{R}$-divisors, namely $D=P_\sigma(D)+N_\sigma(D)$. $P_\sigma(D)$ and $N_\sigma(D)$ are called the positive and negative parts of $D$ respectively. We have $\nu(D)= \nu(P_\sigma(D))$ and $\kappa(D)=\kappa(P_\sigma(D))$ \cite[Chapter 5, Proposition 2.7]{Nak04}, \cite[Lemma 2.9]{GL13}. Moreover, if $\mu:W \to X$ is a birational morphism from a smooth projective variety and $D$ is a pseudoeffective divisor on $X$, then for any effective $\mu$-exceptional divisor $E$ on $W$, we have $P_\sigma(\mu^*D)=P_\sigma(\mu^*D+E)$ \cite[Lemma 2.16]{GL13}. Recall that a pseudoeffective divisor $D$ is called \emph{abundant} if its numerical and Iitaka dimensions match. With this, we have the following result on abundance of $K_\mcF+\Delta$ when $\Delta$ contains an ample divisor.

\begin{corollary} (abundance)
    Let $(\mcF, B)$ be a F-dlt pair on a $\mathbb{Q}$-factorial normal projective variety $X$ such that $(X,B)$ is klt. Let $A$ be an ample $\mathbb{Q}$-divisor on $X$ and $\Delta:=A+B$. Suppose $K_\mcF+\Delta$ is pseudoeffective and $\mcF$ is algebraically integrable. Then there exists a birational morphism $q:W \to X$ from a smooth projective variety such that $P_\sigma(q^*(K_\mcF+\Delta))$ is semiample. In particular, $K_\mcF+\Delta$ is abundant.

    \begin{proof}
        We follow the notation from the proof of the above Theorem. Choose $\pi: X' \to X$, $X' \xleftarrow{p}W \xrightarrow{r}X''$ as in the proof of Theorem \ref{CSconj}. As observed above, we have 

        \begin{center}
            $p^*(K_{\mcF'}+\Gamma) \sim q^*(K_\mcF+\Delta)+p^*F'$ where $p^*F' \geq 0$ is $q$-exceptional.
        \end{center}
        This gives $P_\sigma(q^*(K_\mcF+\Delta))=P_\sigma(p^*(K_{\mcF'}+\Gamma))$. We also have
        \begin{center}
            $p^*(K_{\mcF'}+\Gamma)=r^*(K_{\mcF''}+\Gamma'')+F$
        \end{center}
        for some $F \geq 0$ which is $r$-exceptional and $K_{\mcF''}+\Gamma''$ is semiample. Thus $P_\sigma(p^*(K_{\mcF'}+\Gamma)= P_\sigma(r^*(K_{\mcF''}+\Gamma''))$ is semiample. We thus conclude that $P_\sigma(q^*(K_\mcF + \Delta))$ is semiample. Since for any pseudoeffective divisor $D$, $P_\sigma(D)$ and $D$ have the same numerical and Iitaka dimension, the abundance of $K_\mcF+\Delta$ follows.

    \end{proof}
\end{corollary}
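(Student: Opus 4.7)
The plan is to reuse almost verbatim the construction in the proof of Theorem \ref{CSconj} and to replace only the closing negativity-lemma step by Nakayama's invariance of $P_\sigma$ under addition of effective exceptional divisors. Nefness of $K_\mcF+\Delta$ was used in Theorem \ref{CSconj} only at the very last step, so as soon as $K_\mcF+\Delta$ is pseudoeffective everything preceding that step carries over unchanged.

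Concretely, I would first take an equidimensional toroidal reduction $\pi:X'\to X$ adapted to $(\mcF,B)$ as in Definition \ref{dlt}, produce the boundary $\Gamma$ on $X'$ together with an effective $\pi$-exceptional divisor $F'$ satisfying $K_{\mcF'}+\Gamma\sim \pi^*(K_\mcF+\Delta)+F'$, and then run the same $K_{\mcF'}+\Gamma$-MMP with scaling over $Y'$. Interpreted on the gklt side via Property $(*)$, this is a klt MMP over $Y'$ with ample scaling which terminates by \cite{BCHM} and produces $\phi:X'\dashrightarrow X''/Y'$ with $K_{\mcF''}+\Gamma''$ nef. Lemma \ref{ab} then upgrades nefness to semiampleness of $K_{\mcF''}+\Gamma''$.

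The final step is to transfer this semiampleness back to a resolution of $X$. Fixing a common log resolution $X'\xleftarrow{p}W\xrightarrow{r}X''$ with composite $q:W\to X$, the two identities
\begin{equation*}
p^*(K_{\mcF'}+\Gamma)\sim q^*(K_\mcF+\Delta)+p^*F',\qquad p^*(K_{\mcF'}+\Gamma)=r^*(K_{\mcF''}+\Gamma'')+F
\end{equation*}
hold with $p^*F'$ effective and $q$-exceptional and $F$ effective and $r$-exceptional. By \cite[Lemma 2.16]{GL13}, $P_\sigma$ is unaffected by adding effective exceptional divisors, hence
\begin{equation*}
P_\sigma(q^*(K_\mcF+\Delta))=P_\sigma(p^*(K_{\mcF'}+\Gamma))=P_\sigma(r^*(K_{\mcF''}+\Gamma''))=r^*(K_{\mcF''}+\Gamma''),
\end{equation*}
where the last equality uses that a semiample (in particular nef) divisor equals its own positive part. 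This is the required semiample divisor, and the abundance statement is automatic since $\kappa$ and $\nu$ are preserved by passing to $P_\sigma$ while semiample divisors are abundant. The only subtlety I would have expected is termination of the MMP of Theorem \ref{CSconj} under mere pseudoeffectivity of $K_\mcF+\Delta$ rather than nefness, but since termination there was always driven by the underlying klt MMP over $Y'$ with ample scaling, nefness of $K_\mcF+\Delta$ was never needed for that step.
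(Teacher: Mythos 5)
Your proposal is correct and matches the paper's own proof essentially word for word: same appeal to the construction of Theorem \ref{CSconj}, same common resolution $X'\xleftarrow{p}W\xrightarrow{r}X''$ with $q:W\to X$, same two linear equivalences, same invocation of \cite[Lemma~2.16]{GL13} to kill the effective exceptional divisors on both sides, and the same closing remark that $\kappa$ and $\nu$ are unchanged by passing to $P_\sigma$. Your extra observation that termination of the MMP in Theorem \ref{CSconj} relies only on BCHM applied to the underlying klt MMP over $Y'$ with ample scaling (so that pseudoeffectivity of $K_\mcF+\Delta$, which guarantees $K_{\mcF'}+\Gamma$ is pseudoeffective and hence the MMP ends at a minimal model rather than a Mori fibre space, suffices in place of nefness) is a correct and useful gloss that the paper leaves implicit.
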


\begin{corollary}(Contraction theorem) \label{cont} Let $(\mcF,B)$ be an F-dlt pair on a $\mathbb{Q}$-factorial normal projective variety $X$ such that $(X,B)$ is klt and $R \subset \overline{NE}(X)$ a $K_\mcF+B$-negative exposed extremal ray. Then there exists a projective contraction $\phi:X \to Y$ satisfying the  property that for any integral curve $C \subset X$, $\phi(C)$ is a point iff $[C] \in R$.

    \begin{proof}
    Let $H_R :=K_{\mcF}+B+A$ be a supporting Cartier divisor of $R$, where $A$ is some ample $\mathbb{Q}$-divisor on $X$. Then $H_R$ is nef and hence semiample by Theorem \ref{CSconj}. Let $\phi:X \to Y$ be the Iitaka fibration of $H_R$.  
    \end{proof}
\end{corollary}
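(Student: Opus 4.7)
The plan is to deduce the contraction directly from Theorem \ref{CSconj}. First I would produce a nef $\mbQ$-Cartier divisor $H_R$ supporting $R$ of the form $K_\mcF + B + A$ with $A$ ample, so that the hypotheses of Theorem \ref{CSconj} are in place. Since $R$ is exposed, fix any nef Cartier divisor $H$ with $H^\perp \cap \overline{NE}(X) = R$. On a compact slice of $\overline{NE}(X)$, $H$ is nonnegative and vanishes only on the class spanning $R$, while $(K_\mcF + B) \cdot R < 0$; a standard Kleiman-criterion argument then shows that $tH - (K_\mcF + B)$ is ample for all sufficiently large rational $t$. Setting $H_R := tH$ for such a $t$ and $A := H_R - (K_\mcF + B)$ gives the desired decomposition, and $H_R$ still supports $R$.

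With $H_R = K_\mcF + A + B$ nef, $A$ ample, $(\mcF, B)$ F-dlt and $(X, B)$ klt, Theorem \ref{CSconj} immediately yields semiampleness of $H_R$. I would then take $\phi : X \to Y$ to be the Iitaka fibration of $H_R$ (equivalently, the morphism associated to $|mH_R|$ for sufficiently divisible $m$), which is a projective contraction onto a normal projective variety $Y$.

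For the characterization of contracted curves, an integral curve $C \subset X$ satisfies $\phi(C) = \pt$ if and only if $H_R \cdot C = 0$, and since $H_R^\perp \cap \overline{NE}(X) = R$, this is in turn equivalent to $[C] \in R$. The only step with any nontrivial content is the first one, namely producing $H_R$ in the required form from the exposedness of $R$ and $(K_\mcF+B)$-negativity; the substantive content of the corollary, semiampleness, is entirely absorbed into Theorem \ref{CSconj}.
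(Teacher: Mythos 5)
Your proposal follows exactly the same route as the paper: produce a supporting nef divisor $H_R \equiv K_\mcF + B + A$ with $A$ ample (using exposedness and a standard Kleiman-type argument), invoke Theorem \ref{CSconj} to get semiampleness, and take the Iitaka fibration. You simply spell out the construction of $H_R$ that the paper leaves implicit as "some ample $\mathbb{Q}$-divisor $A$."
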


\begin{corollary}[Existence of good minimal model] \label{gmm} Let $\mcF$ be an algebraically integrable foliation on a $\mathbb{Q}$-factorial normal projective variety $X$ which is induced by a projective contraction $f:X \to Y$ to a $\mbQ$-factorial normal projective variety without exceptional divisors. Suppose $(\mcF,B,\M)$ is glc and that $(X, B+G-\epsilon f^*B_f, \M)$ is gklt for some $\epsilon >0$, where $G$ denotes the inverse image of the branch divisor and $B_f$ the reduced branch divisor of $f$. Let $A$ be an ample $\mbQ$-divisor on $X$. Then we have the following:
\begin{enumerate}
    \item If $K_\mcF+B+\M+\A$ is pseudoeffective, the F-glc pair $(\mcF, B, \M+\A)$ has a good minimal model.
    \item If $K_\mcF+B+\M$ is not pseudoeffective, then we can run a $K_\mcF+B+\M$-MMP terminating with a Mori fiber space.
\end{enumerate}


\begin{proof}

Since $K_\mcF+B+\M+\A \sim_Y K_X+B+G-\epsilon f^*B_f+\M+\A$, by cone theorem for the F-glc pair $(\mcF, B, \M+\A)$, we may run a $K_\mcF+B+\M+\A$-MMP with scaling of $A$ which is a $K_X+D$-MMP (for some klt pair $(X,D)$ with $D$  big) over $Y$ with scaling of $A$ (see proof of Claim \ref{clm:dlt-model}). We claim that the property that the moduli part contains an ample divisor carries through the MMP. Indeed, let $\phi_1:X \dashrightarrow X_1$ be a step of the MMP. Choose an ample divisor $H_1$ on $X_1$ such that if $H_X:=\phi_{1*}^{-1}H_1$, there exists $C \sim A-H_X$ which is ample. $\phi_1$ is clearly also a $K_\mcF+B+\M+\A+\epsilon C$-MMP if $\epsilon >0$ is sufficiently small. Moreover, $(\mcF, B, \M+\A+\epsilon \C)$ is F-glc and so is $(\mcF, B, \M+(1-\epsilon)\A+\epsilon \C)$. Clearly, $\phi_1$ is also a $(K_\mcF+B+\M+(1-\epsilon)\A +\epsilon \C)$-MMP as well. Thus, $(\mcF_1, B_1, \M+(1-\epsilon) \A+\epsilon \C)$ is glc. This implies that $(\mcF_1, B_1, \M+(1-\epsilon)\A+ \epsilon \C +\epsilon H_1)$ is glc. Now note that 
\begin{equation} \label{amplemoduli}
    K_{\mcF_1}+B_1+\M_{X_1}+(1-\epsilon)\A_{X_1}+\epsilon \C_{X_1}+\epsilon H_1 \sim _{\mbQ} K_{\mcF_1}+B_1+\M_{X_1}+\A_{X_1}= 
    \phi_{1*}(K_\mcF+B+\M_X+\A).
\end{equation}
This proves the claim. Let $\phi: X \dashrightarrow X'$ denote the full MMP (note that it terminates by \cite{BCHM}; see proof of Claim \ref{clm:dlt-model} for details of the arguments). Let $(\mcF', B', \N'+H')$ denote the induced F-glc pair on $X'$ with $H'$ ample. If $f': X' \to Y$ is the induced morphism, then $f'$ has no exceptional divisors. Moreover, observe that $(X', B'-\epsilon f'^*B_{f'}, \N'+H')$ is gklt. Thus, $K_{\mcF'}+B'+\N'+H'$ is semiample by Lemma \ref{ab}. \\

Now suppose that $K_\mcF+B+\M$ is not pseudoeffective. Then there exists an ample divisor  $A$ such that $K_\mcF+B+\M+A$
 is not pseudoeffective. We  can run a $K_\mcF+B+\M+A$-MMP as above terminating with a $K_\mcF+B+\M$-Mori fiber space.


\end{proof}
    
\end{corollary}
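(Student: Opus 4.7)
The plan is to reduce the statement to Lemma \ref{ab} by running an appropriate MMP over $Y$ while preserving the ample piece in the moduli $\mathbf{M}$. The key observation that makes everything go is that because $f:X\to Y$ is an equidimensional projective contraction without exceptional divisors (in the sense that $f_*$ does not contract any divisor of $X$), Lemma \ref{lem:equidimensional-adjunction} gives $K_{\mcF}\sim_Y K_X+G$, and hence
\[
K_{\mcF}+B+\mathbf{M}_X+\mathbf{A}_X \;\sim_Y\; K_X+B+G-\epsilon f^*B_f+\mathbf{M}_X+\mathbf{A}_X,
\]
where by hypothesis the right-hand side is a generalized klt pair with big boundary (since $A$ is ample). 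This relative presentation turns every foliated MMP step into a klt MMP step over $Y$, to which BCHM and its generalized-pair analogues apply.

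For part (1), I would run a $K_{\mcF}+B+\mathbf{M}_X+\mathbf{A}_X$-MMP with scaling of $A$. By the cone theorem (Theorem \ref{cone}) together with Corollary \ref{fin}, any negative extremal ray is spanned by a rational curve tangent to $\mcF$, hence contracted by $f$; so every step is a step over $Y$. Under the $\sim_Y$-identification above, this MMP becomes a $K_X+D$-MMP for a klt pair $(X,D)$ with $D$ big, which terminates by BCHM. The subtle point I have to manage carefully is that the ample piece $\mathbf{A}$ should survive as a genuine ample moduli divisor on the final model. Given a step $\phi_1:X\dashrightarrow X_1$, I would pick an ample $\mbQ$-Cartier $H_1$ on $X_1$, set $H_X:=\phi_{1*}^{-1}H_1$, and write $A\sim H_X+C$ with $C$ ample for suitably small $H_1$. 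Then $\phi_1$ is also a step of the $K_{\mcF}+B+\mathbf{M}_X+(1-\delta)\mathbf{A}_X+\delta\mathbf{C}_X+\delta H_X$-MMP for small $\delta>0$, and pushing forward places $\delta H_1$ as an ample moduli piece on $X_1$ (equation analogous to (\ref{amplemoduli}) in the excerpt). Iterating, the terminal model $(\mcF',B',\mathbf{N}'+\mathbf{H}')$ on $X'$ carries an ample divisor $H'$ inside its moduli. The induced $f':X'\to Y$ still has no exceptional divisors, since exceptional divisors would have to be contracted somewhere in the MMP over $Y$, and the generalized klt condition on $(X',B'+G'-\epsilon f'^*B_{f'},\mathbf{N}'+\mathbf{H}')$ is preserved throughout. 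Lemma \ref{ab} then gives semiampleness of $K_{\mcF'}+B'+\mathbf{N}'_{X'}+H'$, so $X'$ is a good minimal model.

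For part (2), if $K_{\mcF}+B+\mathbf{M}_X$ is not pseudoeffective, then for any sufficiently small ample $\mbQ$-divisor $A$ the sum $K_{\mcF}+B+\mathbf{M}_X+A$ is still not pseudoeffective. Running the same scaling MMP as in (1), termination still holds (via the klt-pair-with-big-boundary presentation over $Y$), but this time the MMP cannot end in a minimal model and must terminate with a Mori fiber space, which is necessarily a $K_{\mcF}+B+\mathbf{M}_X$-Mori fiber space since the scaling parameter has been absorbed.

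The main obstacle is the bookkeeping in part (1): keeping track, through each step of the MMP, that (a) every contracted ray really is vertical over $Y$ so that the MMP is relative to $Y$, (b) $f'$ continues to have no exceptional divisors and to admit a \emph{gklt} generalized pair $(X',B'+G'-\epsilon f'^*B_{f'},\mathbf{N}'+\mathbf{H}')$, and most of all (c) the ample piece inside the moduli is carried through rather than dissipating into the boundary. Once these structural invariants are in place, Lemma \ref{ab} does the heavy lifting.
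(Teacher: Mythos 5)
Your proposal is correct and follows essentially the same route as the paper: reduce to a relative klt MMP over $Y$ via $K_{\mcF}\sim_Y K_X+G$, carry the ample moduli piece through each step by the $A\sim H_X+C$ decomposition, and invoke Lemma~\ref{ab} on the terminal model; part~(2) is handled identically by choosing $A$ small enough that $K_{\mcF}+B+\mathbf{M}_X+A$ stays non-pseudoeffective. The only cosmetic difference is your use of $\delta H_X$ on $X$ versus the paper inserting $\epsilon H_1$ only after pushing forward to $X_1$, but the bookkeeping is the same.
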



\begin{definition}[Moduli part \cite{ACSS}]\label{moddef} Let $f: X \to Z$ be a contraction from a normal variety to a $\mathbb{Q}$-factorial variety. Let $\Delta$ be an effective divisor on $X$ such that $K_X+\Delta$ is $\mathbb{Q}$-Cartier. Let 
\begin{center}
    $B_Z^{\Delta} := \Sigma_P(1-a_P)P$ where $P$ ranges over all prime divisors in $Z$
    \end{center}
   where $ a_P:=$sup$\{t| (X, \Delta + tf^*P)$ is sub-lc over the generic point of $P$\}.

Then the moduli part of $f$ corresponding to $(X, \Delta)$ is defined by \begin{center}
    $M_X^{\Delta}:= K_X+\Delta- f^*(K_Z+B_Z^{\Delta})$
\end{center}
    
\end{definition}

\begin{corollary}[Semiampleness of the moduli part]\label{mod}
Let $(X,B)$ be a $\mbQ$-factorial klt pair, $A$ an ample $\mbQ$-divisor on $X$. Suppose $f: (X, \Sigma_X) \to (Y, \Sigma_Y)$ is an equidimensional toroidal morphism to a smooth projective variety such that $K_X+A+B \sim _{\mbQ, f}0$ and components of $B$ are components of the $f$-horizontal part of the toroidal boundary $\Sigma_X$. . Then the moduli part $M_X^{A+B}$ is semiample.

\begin{proof}
Our arguments are similar to those of \cite[Theorem 1.3]{ACSS}. Let $\mcF$ denote the foliation on $X$ induced by $f$. Let $\Delta:= A+B$. Let $G$ denote the inverse image of the branch divisor and $B_f$ the reduced branch divisor of $f$. Note that $(X, B+G)$ has property ($*$) by \cite[Prop 2.16]{ACSS}. Replacing $A$ by a general member in its linear system (note that the moduli part is only determined up to linear equivalence, so is not affected by this), we have $B_Y^{\Delta+G}=B_Y^{B+G}$ (Clearly, $B_Y^{\Delta+G} \geq B_Y^{B+G}$. Since $(X,B+G+a_P^{B+G}f^*P)$ is lc over $\eta_P$, so is $(X,B+A+G+a_P^{B+G}f^*P)$ for any $P$ which is a component of $B_Y^{B+G}$. Thus $a_P^{B+G} \leq a_P^{\Delta+G}$ and hence $B_Y^{B+G} \geq B_Y^{\Delta+G}$). Next we observe that $B_Y^{B+G}=B_f$ (Indeed, if $P \subset Y$ is a prime divisor not contained in $B_f$, then due to Property ($*$), we have coeff$_P(B_Y^{B+G})=0$. If $P \subset B_f$, then coeff$_P(B_Y^{B+G})=1$). Let $\overline{\Delta}:=\Delta+f^*(B_Y^{\Delta+G}-B_Y^{\Delta})$. Clearly, $M_X^{\Delta}=M_X^{\overline{\Delta}}$. Note that $E:=\Delta+G -\overline{\Delta} \geq 0$ and is $f$-very exceptional (i.e. $E$ does not support the fiber over any codimension $1$ point of $Y$). Indeed, for $P \subset B_f$, let $f^*P=n_1Q_1+ \cdots + n_kQ_k$ with $n_1 \geq n_2 \geq \cdots \geq n_k$, then $Q_1 \not\subset$Supp $\Delta+G-\overline{\Delta}$. This can be seen as follows: as a consequence of Property ($*$), it follows that $a_P^{\Delta}= \frac{1}{n_1}$, and thus coeff$_PB_Y^{\Delta}=1-\frac{1}{n_1}$. Since $B_Y^{\Delta+G}=B_f$, it follows that coeff$_P(B_Y^{\Delta+G}-B_Y^{\Delta})= \frac{1}{n_1}$ and thus coeff$_{Q_1}f^*(B_Y^{\Delta+G}-B_Y^{\Delta})=1$. Since $E=G-f^*(B_Y^{\Delta+G}-B_Y^{\Delta})$, it follows that coeff$_{Q_1}E=0$. If $P\not \subset B_f$, then coeff$_PB_Y^{\Delta+G}=$ coeff$_PB_Y^{\Delta}=0$.\\

Since $K_X+\overline{\Delta} \sim _Y 0$, it follows that $K_X+\Delta+G \sim_Y E$. Note that \begin{center}
$K_\mcF+B+A \sim _YK_X+B+G-\epsilon f^*B_f+A$ 
\end{center}
for all $\epsilon>0$. Moreover, $(\mcF,B,\A)$ is F-glc and $(X,B+G-\epsilon f^*B_f,\A)$ is gklt for some $\epsilon>0$ since $f$ is toroidal. Now, by \cite[Theorem 3.4]{Bi}, we can run a $K_\mcF+B+\A$-MMP $\phi:X \dashrightarrow X' $ which is a $K_X+B+G-\epsilon f^*B_f+\A$-MMP over $Y$ with scaling of $A$ such that $\phi_*E=0$ (see the proof of Claim \ref{clm:dlt-model} for details). This is also a $K_\mcF+B$-MMP and thus Property ($*$) and equidimensionality are preserved by it \cite[Prop 2.18]{ACSS}. We have $K_{X'}+\Delta'+G' \sim_Y 0$ and $K_{\mcF'}+\Delta'$ is nef, where $\Delta':=\phi_*\Delta$ and $G':= \phi_*G$. By the arguments used in Corollary \ref{gmm}, we can write $K_{\mcF'}+\Delta' \sim K_{\mcF'}+B'+\N+\H$, where $(\mcF', B', \N+\H)$ is F-glc and $H$ is ample. Then $K_{\mcF'}+\Delta'$ is semiample by Corollary \ref{gmm}.\\

We claim that $M_{X'}^{\Delta'+G'} \sim K_{\mcF'}+\Delta'$. Indeed, as observed above, the induced morphism $f':X' \to Y$ is equidimensional and $(\mcF',B')$ has property ($*$). By \cite[Prop 3.6]{ACSS}, we have $M_{X'}^{B'+G'} \sim K_{\mcF'}+B'$. Now, by definition, 
\begin{center}
$M_{X'}^{\Delta'+G'}= K_{X'}+B'+G'+A'-f'^*(K_{Y'}+B_{Y'}^{B'+G'+A'})$. 
\end{center}
For $P\subset Y$ a prime divisor, we have $(X', B'+G'+a_P^{B'+G'}f'^*P)$ is lc over $\eta_P$ and has some lc center. Now, $\phi$ is a $K_X+B+G+a_P^{B'+G'}f^*P$-MMP over $\eta_P$. Thus any lcc of $(X', B'+G'+a_P^{B'+G'}f'^*P)$ over $\eta_P$ arises as image of some lcc of $(X, B+G+a_P^{B'+G'}f^*P)$ over $\eta_P$ (note the last entity is lc by property ($*$)). $A$ being a general ample divisor, does not contain any such lcc for any such $P$. Thus, neither does $A'$. We conclude that $(X', B'+G'+A'+a_P^{B'+G'}f'^*P)$ is lc over $\eta_P$ and hence $a_P^{\Delta'+G'}\geq a_P^{B'+G'}$ for all prime divisors $P\subset Y$. It follows that $B_{Y'}^{\Delta'+G'}=B_{Y'}^{B'+G'}$. From this, we can infer that $M_{X'}^{\Delta'+G'}= M_{X'}^{B'+G'}+A' \sim K_{\mcF'}+\Delta'$. This proves the claim. We also note that $M_X^{\Delta+G} =M_X^{B+G}+A \sim K_\mcF+\Delta$. \\

Now $M_{X'}^{\overline{\Delta}'}=M_{X'}^{\Delta'+G'} \sim K_{\mcF'}+\Delta'$ is semiample. Let $X \xleftarrow{p}W \xrightarrow{q}X'$ be a resolution of indeterminacy of $\phi:X \dashrightarrow X'$. Then $p^*(K_\mcF+\Delta)=q^*(K_{\mcF'}+\Delta') +F$ for some $F \geq 0$ which is exceptional over $X'$. Now $K_\mcF+\Delta \sim K_X+\Delta+G-f^*(K_Y+B_Y^{\Delta+G})= K_X+\Delta-f^*(K_Y+B_Y^{\Delta})+E= M_X^\Delta +E$. It follows that 
\begin{equation} \label{ }
p^*M_X^\Delta = q^*M_{X'}^{\Delta'+G'}+F-p^*E.
\end{equation}
By \cite[Prop 2.21]{ACSS}, $F-p^*E \sim D \geq 0$. Now we have $F-p^*E \equiv_{X'}p^*M_X^\Delta$. Since $M_X^\Delta \equiv _Y 0$, it follows that $F-p^*E \equiv_{X'}0$. Since $F$ and $p^*E$ are both exceptional over $X'$, $p^*E \geq F$ by negativity lemma over $X'$. Combining this with the above observation, it follows that $p^*E=F$. By equation \ref{ }, it follows that $p^*M_X^\Delta=q^*M_{X'}^{\Delta'+G'}$ is semiample.

\end{proof}

\end{corollary}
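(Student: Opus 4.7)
The plan is to interpret $M_X^{A+B}$ as a foliated canonical divisor (up to $\mbQ$-linear equivalence) and then invoke the foliated minimal model machinery, specifically Corollary \ref{gmm}. Set $\Delta:=A+B$, let $\mcF$ be the algebraically integrable foliation induced by $f$, let $B_f$ be the reduced branch divisor of $f$ and $G:=(f^*B_f)_{\red}$. Replacing $A$ by a general element in its $\mbQ$-linear system (harmless for the moduli part), I first compare discriminants: Property $(*)$ for the toroidal pair $(X,B+G)$ forces $B_Y^{B+G}=B_f$, and because $A$ is general the inequality $a_P^{B+G}\<a_P^{\Delta+G}$ becomes an equality, giving $B_Y^{\Delta+G}=B_f$ as well. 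With $\overline\Delta:=\Delta+f^*(B_Y^{\Delta+G}-B_Y^\Delta)$ and $E:=\Delta+G-\overline\Delta$, the explicit description of $f^*P$ for $P\subset B_f$ afforded by Property $(*)$ shows $E\>0$ is $f$-very exceptional and $M_X^\Delta=M_X^{\overline\Delta}$; using Lemma \ref{lem:equidimensional-adjunction} one then checks $K_X+\Delta+G\sim_Y E$, which is the bridge between $M_X^\Delta$ and $K_\mcF+\Delta$.

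The next step is to replace $X$ by a good minimal model of $K_\mcF+\Delta$ over $Y$. Because $f$ is toroidal, $(\mcF,B,\A)$ is F-glc and $(X,B+G-\epsilon f^*B_f,\A)$ is gklt for some $\epsilon>0$, so Corollary \ref{gmm} applies. I run a $K_\mcF+B+\A$-MMP $\phi:X\dashrightarrow X'$ over $Y$ with scaling of $A$, which is simultaneously a $K_X+B+G-\epsilon f^*B_f+\A$-MMP; the very-exceptional MMP results (e.g.\ \cite[Theorem 3.4]{Bi}) let me arrange $\phi_*E=0$, and Property $(*)$ and equidimensionality of $f':X'\to Y$ are preserved by \cite[Prop.\ 2.18]{ACSS}. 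By Corollary \ref{gmm}, $K_{\mcF'}+\Delta'$ is semiample.

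To extract the semiampleness of the moduli part, I compute $M_{X'}^{\Delta'+G'}$ using \cite[Prop.\ 3.6]{ACSS}: under Property $(*)$ one has $M_{X'}^{B'+G'}\sim K_{\mcF'}+B'$, and since $A$ was general its transform $A'$ avoids every lc centre of $(X',B'+G'+a_P^{B'+G'}f'^*P)$ over any codimension-one point $\eta_P$ of $Y$, forcing $B_{Y'}^{\Delta'+G'}=B_{Y'}^{B'+G'}$; hence $M_{X'}^{\Delta'+G'}\sim K_{\mcF'}+\Delta'$ is semiample. Finally, on a common resolution $X\xleftarrow{p}W\xrightarrow{q}X'$ I write
\[
p^*M_X^\Delta=q^*M_{X'}^{\Delta'+G'}+(F-p^*E),
\]
with $F\>0$ exceptional over $X'$; by \cite[Prop.\ 2.21]{ACSS}, $F-p^*E\sim D\>0$, and since $M_X^\Delta\equiv_Y 0$ one has $F-p^*E\equiv_{X'}0$. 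The negativity lemma over $X'$ then gives $p^*E=F$, so $p^*M_X^\Delta=q^*M_{X'}^{\Delta'+G'}$ is semiample, and hence so is $M_X^\Delta$.

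The main obstacle I expect is the careful bookkeeping rather than any deep new input: $M_X^\Delta$ and $K_\mcF+\Delta$ differ by an $f$-very exceptional divisor $E$, and one has to ensure simultaneously that (i) the discriminants $B_Y^\Delta$ and $B_Y^{\Delta+G}$ interact correctly with the toroidal structure so that $E$ is indeed very exceptional, (ii) the chosen MMP contracts exactly this $E$ while respecting the F-glc and gklt hypotheses needed for Corollary \ref{gmm}, and (iii) the comparison on $W$ between $p^*M_X^\Delta$ and $q^*M_{X'}^{\Delta'+G'}$ involves two negativity-lemma arguments that must match. The genericity of $A$ is used crucially in both the discriminant comparison and the lc-centre avoidance step on $X'$.
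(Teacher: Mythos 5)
Your proposal is correct and follows essentially the same route as the paper: the same reduction of $M_X^{\Delta}$ to $K_\mcF+\Delta$ via the $f$-very-exceptional divisor $E$, the same $K_\mcF+B+\mathbf{A}$-MMP over $Y$ contracting $E$ through Corollary \ref{gmm} and \cite[Theorem 3.4]{Bi}, the same identification $M_{X'}^{\Delta'+G'}\sim K_{\mcF'}+\Delta'$ using \cite[Prop.\ 3.6]{ACSS} and genericity of $A$, and the same two-sided negativity argument on a common resolution. There is no substantive deviation from the paper's proof.
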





\bibliographystyle{hep}
\bibliography{references}

\end{document}